\pgfplotsset{compat=newest}
\tikzset{>=stealth'}
\tikzset{text deco/.style={postaction={decorate, decoration={text along path,#1}}}}
\newtheorem{theorem}{Theorem}[section]
\theoremstyle{definition}
\def\div{\mathop{\mathrm{div}}\nolimits}
\newcommand{\ub}{\mathbf{u}}
\newcommand{\vb}{\mathbf{v}}
\newcommand{\Ub}{\mathbf{U}}
\newcommand{\Vb}{\mathbf{V}}
\newcommand{\wb}{\mathbf{w}}
\newcommand{\fb}{\mathbf{f}}
\newcommand{\fst}{f_\text{st}}
\newcommand{\cst}{c}
\newcommand{\fstb}{\mathbf{f}_\text{st}}
\newcommand{\uh}{\ub_h}
\newcommand{\ph}{p_h}
\newcommand{\vh}{\vb_h}
\newcommand{\wh}{\wb_h}
\newcommand{\qh}{q_h}
\newcommand{\Vh}{\Vb_h}
\newcommand{\Qh}{Q_h}
\newcommand{\hmod}[1]{{#1}^{\text{m}}_h}
\newcommand{\phm}{\hmod{p}}
\newcommand{\uhm}{\hmod{\ub}}
\newcommand{\sui}{\mathbf{s}^u_{i}}
\newcommand{\spi}{s^p_{i}}
\newcommand{\su}{\mathbf{s}^u}
\newcommand{\gam}{\boldsymbol{\gamma}}
\newcommand{\piso}{$P_1\text{--iso}P_2/P_1\,$}
\title[Solution Techniques for the Stokes System]
{Solution Techniques for the Stokes System:\\
\large A~priori and a~posteriori modifications, resilient algorithms}
\author[M.~Huber, L.~John, P.~Pustejovska, U.~R\"ude, C.~Waluga, B.~Wohlmuth]{Markus Huber, Lorenz John, Petra Pustejovska, Ulrich R\"ude,\\ Christian Waluga, Barbara Wohlmuth\thanks{
This work was supported by the German Research Foundation (DFG) through the Priority Programme 1648 “Software for Exascale Computing” (SPPEXA).}}
\begin{document}
\normalem

\begin{abstract}
This article proposes modifications
to standard low order finite element approximations
of the Stokes system
with the goal
of improving both the approximation quality and the parallel algebraic
solution process.
Different from standard finite element techniques, we do
not modify or enrich the approximation spaces but modify the operator itself
to ensure fundamental physical properties such as
mass and energy conservation.
Special local a~priori correction techniques at re-entrant corners
lead to an improved representation of the energy in the discrete system
and can suppress the global pollution effect.
Local mass conservation can be achieved by an a~posteriori
correction to the finite element flux.
This avoids artifacts in coupled multi-physics
transport problems.
Finally, hardware failures in large supercomputers may lead to a loss of
data in solution subdomains.
Within parallel multigrid, this
can be compensated by 
the accelerated solution of local subproblems.
These resilient algorithms will gain
importance on future extreme scale computing systems.
\end{abstract}

\begin{classification}
Primary 00A05; Secondary 00B10.
\end{classification}

\begin{keywords}
Stokes system, stabilization, energy-corrected finite element methods, optimal order a~priori estimates, fault tolerance, resilience, pollution effect, re-entrant corner, mass conservation
\end{keywords}

\maketitle

\section{Introduction}

%

Advances in numerical methods together with progress in computer technology 
enables the modeling and simulation of ever more complex physical
phenomena with increasingly higher accuracy.
Cost aware numerical simulation,
embedded in the wider field of computational science and engineering
has thus grown to be a fundamental pillar for 
all quantitative sciences.

The past decades have seen the development of an extensive theoretical
foundation of finite element (FE) approximation and for optimal solution algorithms.
In the context of simulation technology,
the success of this research must be measured by
which accuracy can be achieved at which cost.
This naturally leads to the question how both accuracy and cost can be quantified.
However, an answer to these seemingly trivial and obvious 
questions is more difficult than it may appear at first sight.
In particular, the cost metrics have undergone a dramatic change 
due to the advent of ever more complex, heterogeneous and hierarchical parallel computer systems.
For the development of  numerical methods that will be used in large scale simulations,
it has become important to acknowledge that non-parallel computers have ceased to exist.
Equally important, we will illustrate that even low cost computers can
solve huge problems, provided the models, algorithms, and software
have been developed to exploit their full power.

In this article, we will present new research results that explore
the duality of accuracy versus cost. We will use
the Stokes system for viscous incompressible flow as a guiding example.
The Stokes problem in the 
polyhedral domain 
$\Omega\subset \mathbb{R}^d$, $d=2,3$,
with homogeneous Dirichlet boundary conditions
is given by
\begin{equation}\label{stokes}
	\begin{alignedat}{3}
	-\Delta \mathbf{u} +\nabla p & = \mathbf{f} &\quad& \text{in }\Omega,\\
		\mathrm{div}\, \mathbf{u} & = 0 &&\text{in }\Omega,\\
		\mathbf{u} &= \mathbf{0} &&\text{on } \partial\Omega,\\
	\end{alignedat}
\end{equation}
where $\mathbf{u}$ denotes the velocity field, $p$ the pressure,
 and $\mathbf{f}$
represents a given force term.

The methods proposed below are corrections that compensate known numerical 
deficiencies with minimal additional cost.
In the case of re-entrant corners, the local singularity leads to a  global deterioration
of accuracy.
Rather than enriching the approximation space to better capture the singular behavior,
we propose a strictly local a~priori modification of the energy inner product,
showing that once the energy is correctly represented, the global error pollution
will be eliminated. 
This avoids an enrichment of the FE approximation space by, e.g., local mesh refinement
that would lead to more complex data structures, a more involved
load balancing strategy and an increased communication in solution algorithms.
%
The energy correction, in contrast, requires only the change of a few coefficients of the stiffness
matrix, and thus does not lead to a significant increase in cost.

Analogously, we will present an a~posteriori correction technique that equips simple
stabilized equal order elements with the much-desired local mass conservation.
This again avoids the use of more complex discretizations
that would 
in turn lead to an increased cost in the solver and the parallel communication.

The solvers presented here rely on the multigrid principle
combined suitably with Krylov space acceleration.
We will demonstrate that these methods do not only have optimal complexity,
but that they can be implemented
with excellent parallel performance.
Here we go beyond the classical notion of scalability
which is only a necessary but no sufficient prerequisite for a parallel solver
being fast. Having a locally conservative scheme and a fast solver allow us to do large-scale  long-term  simulation runs that
 arise from simplified Earth mantle convection problems.

On future supercomputers, a fail-safe operation of the complete system
may not be guaranteed. In this case, some components may fail and the corresponding
part of the  running computation may be lost. Then,
it is  attractive to use algorithms that can compensate for a partial loss of data.
This algorithmically based fault tolerance is 
an intrinsic correction technique that compensates for large local
errors that are naturally introduced when a processor fails.

The rest of this article is structured as follows: In Sect.~\ref{sec:discr}, we 
present the standard notation and 
recall the stabilized equal-order finite element discretization.
Then, in Sect.~\ref{sec:ec}, we introduce the method of energy-corrections 
and demonstrate 
its effectiveness for 
simple examples in two dimensions.  
Parallel multigrid solvers for large scale computations
and 
the new fault tolerant algorithms are 
presented in Sect.~\ref{sec:solver}.
Finally, we discuss 
local mass-conservative correction schemes in Sect.~\ref{sec:mc}
including application to problems arising from geophysics.

\section{Preliminaries  and finite element discretization}\label{sec:discr}
Through this work, we assume $\Omega$ to be an open, bounded, simply connected polyhedral domain.  We  employ standard notation, i.e., $H^k(\Omega)$, $\|\cdot\|_k$, $k\in\mathbb{N}$, denote the Sobolev spaces of functions having square-integrable generalized derivatives up to order $k$, and the standard Sobolev norm, respectively. The case of $k=0$ coincides with the Lebesgue space $L^2(\Omega)$. Additionally, we denote the $H^1(\Omega)$-space with vanishing trace on $\partial\Omega$ by $H^1_0(\Omega)$, and the space of square integrable functions with vanishing mean by $L^2_0(\Omega)$. We also write $X^d$ for a space of vector valued functions which components belong to the function space $X$. 


The main focus of this paper are continuous low equal-order velocity and pressure discretizations. For conforming and shape-regular partitions $\mathcal{T}_h$ of $\Omega$ into simplicial elements, we assume finite element spaces $\Vh=V_h^d\cap H^1_0(\Omega)^d$ and $Q_h=V_h\cap L^2_0(\Omega)$, where $V_h \coloneqq \left\{v_h \in H^1(\Omega): v_h|_T \in P_1(T) \ \forall T \in \mathcal{T}_{h}\right\}$.
It is well-known, that such a pair violates the discrete inf-sup condition. Thus, we work in the following, unless not further specified,  with a stabilized finite element scheme. Namely, let us define $a(\ub,\vb) \coloneqq \int_\Omega \nabla\ub:\nabla\vb\, dx$ and $b(\vb,q) \coloneqq - \int_\Omega q\, \div \vb\, dx$, $\vb,\ub\in H^1(\Omega)$, $q\in L^2(\Omega)$, for which the discrete variational formulation of \eqref{stokes} reads: Find $(\uh, \ph) \in \Vh\times\Qh$ such that
\begin{equation}\label{vastab}
	\begin{alignedat}{3}
		&a(\uh,\vh) + b(\uh,\ph) &&=\langle \fb,\vh\rangle \qquad &&\forall \vh\in \Vh,\\
		&b(\uh,\qh) - \cst(\qh, \ph) &&=\langle \fst,\qh\rangle &&\forall \qh\in \Qh,
	\end{alignedat}
\end{equation}
where the additional	 stabilization terms $\cst(\cdot,\cdot)$ and $\langle \fst,\cdot\rangle$ are specified by the particular stabilization method. In many cases, we shall assume the standard pressure stabilization for which $\langle \fst,\cdot\rangle = 0$ and
\begin{align}
	\cst(\qh, \ph) \coloneqq \sum_{\mathclap{T \in \mathcal{T}_h}} c_T(p_h,q_h),\quad c_T(p_h, q_h) \coloneqq \delta_T h_T^2 \int_T \nabla \ph \cdot \nabla \qh\,  dx,
\end{align}
where $\delta_T > 0$ is a certain stabilization parameter specified later, or the PSPG stabilization where additionally $\langle \fst,\qh\rangle \coloneqq -\sum_{T \in \mathcal{T}_h} \delta_T\, h_T^2 \int_T \fb \cdot \nabla \qh\,  dx$. For details, see, e.g., \cite{Brezzi2013,brezzi-douglas_1988}.

Throughout the paper, for simplicity of notation, we use the symbols $\lesssim$ and $\gtrsim$ for $\leqslant \text{const.}$ and $\geqslant \text{const.}$, respectively, where $\text{const.}$ represents a generic positive constant independent of the mesh size $h$. 

\section{A priori energy correction}\label{sec:ec}
In the presence of 
re-entrant corners, standard numerical schemes 
on uniformly refined meshes
suffer from a pronounced 
deterioration of the global convergence rates.
Here, we focus on the recovery of the optimal convergence rates by a local a~priori modification of a standard finite element scheme in two dimensions.
For simplicity, we restrict ourselves to the case of a single re-entrant corner. 
Before we present our method, let us state here the necessary regularity results for the solution of \eqref{stokes}, together with its main properties. 
For that, some weighted Lebesgue spaces with weight as a distance function $r$ to the non-convex domain-vertex\footnote{In what follows, such a point we simply call singular point.} $x_0$, see, e.g., \cite{adams-fournier_book,Kondratjev1967,kufner_1985}, play an important role. Namely, we define for $\beta\in\mathbb{R}$ and $r(x) = \|x-x_0\|_{l^2}$:
\begin{align}
	L^2_\beta(\Omega)\coloneqq \Big\{f\in L^1_{\text{loc}}(\Omega): \textstyle{\int_\Omega} |r^{\beta} f|^2 dx<\infty \Big\}, \quad \|f\|^2_{0,\beta} \coloneqq \left(\int_{\Omega} |r^\beta f|^2\,dx \right).
\end{align}
The weighted Sobolev space of Kondratev-type of order $k$, are then given by
 $H^k_\beta(\Omega)\coloneqq\left\{f\in L^2_\beta(\Omega): r^{\beta-k+|\alpha|} D^{\alpha} f \in L^{2}(\Omega), |\alpha|\leqslant k \right\}$, where $\alpha=(\alpha_1,\alpha_2)$ is a multi-index with $|\alpha|=\alpha_1+\alpha_2$, and $D^\alpha$ represents the $\alpha$-th generalized derivative, see, e.g., \cite{Kondratjev1967}. The space $H^{k}_\beta(\Omega)$ is equipped with the norm $\|f\|^2_{k,\beta}\coloneqq \sum_{|\alpha|\leqslant k} \| r^{\beta-k+|\alpha|} D^{\alpha} f \|^2_0$. The spaces $L^2_\beta(\Omega)$ and $H^k_\beta(\Omega)$ are Hilbert spaces.
\subsection{Singularities for Stokes flow at re-entrant corners}\label{sec:singularities}
A standard spectral analysis applied on the Dirichlet system \eqref{stokes} leads to a specification of the eigenvalues and (generalized) eigenvectors of the Stokes operator 
that can be used for a local additive composition of the solution, see, e.g., \cite{Dauge1989, Kozlov2001, Markl2008} and the original work \cite{Kondratjev1967}. 
In particular for the Dirichlet problem, the eigenvalues are specified by the roots of 
\begin{align}\label{root}
	\lambda_i^2 \sin^2 (\omega) - \sin^2(\lambda_i\omega)=0, \qquad \lambda_i\neq 0, \quad \lambda_i\in\mathbb{C}. 
\end{align}
Note, that the solutions of \eqref{root} may be complex
and possibly  have higher multiplicity, at most two, 
see Fig.~\ref{F:roots}.

\begin{figure}[ht]
	\centering
	\includegraphics[width=0.75\textwidth]{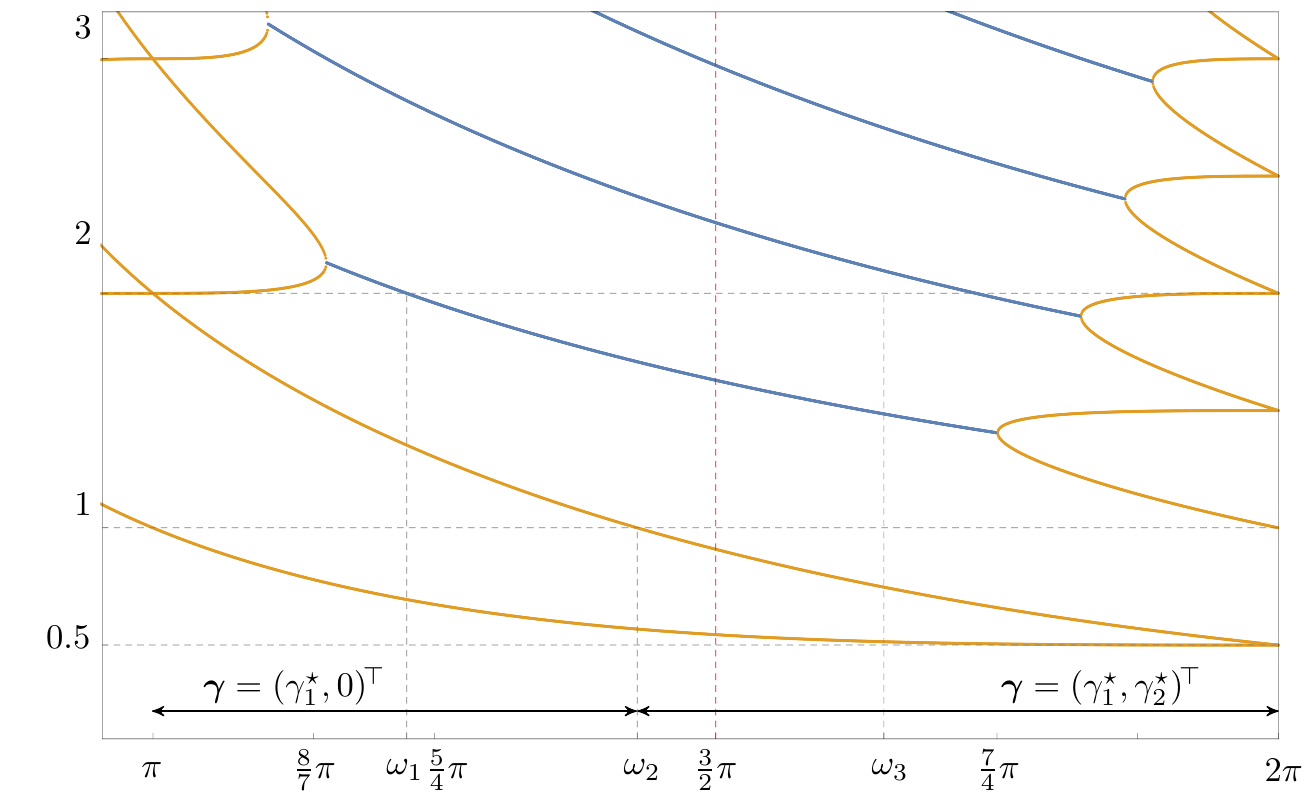}
	\caption{Distribution of the real part of the eigenvalues $\lambda$ with respect to the angle $\omega$ of the re-entrant corner. The blue lines represent complex eigenvalues, the orange ones represent the real eigenvalues. The points of bifurcations, also the point where  $\lambda_2 =1$, have increased multiplicity. In the horizontal axis, $\omega_i$, $i=1,2,3$, represent the following cases: $\omega_1$ is defined such that $\lambda_1 + \lambda_2 =2$, i.e., $\omega_1 = 1.22552...\pi$, $\omega_2$ is the unique angle such that $\lambda_2 =1$, i.e., $\omega_2=1.430296...\pi$, and $\omega_3$ represents the angle for which $\lambda_1+\mathrm{Re}(\lambda_3)=2$, i.e., $\omega_3=1.64941...\pi$. In the plot, there are also included two $\omega-$intervals, $(\pi,\omega_2)$ and $(\omega_2,2\pi)$, see Theorem~\ref{thmmain} and the discussion after.}
	\label{F:roots}
\end{figure}
%

This is significant, since 
these roots constrain the regularity of the particular singular functions. 
The eigenvectors are then, in the polar coordinate-system $(r,\theta)$, 
log-polynomial functions in $r$ 
multiplied with smooth functions in $\theta$. Explicitly, for each $\lambda_i$ with  geometric and algebraic multiplicity $I_i$ and $\kappa_{i,j}$, respectively, the singular functions 
for the velocity and pressure part of the solution are of the form
\begin{align}\label{sinfce}
	\hspace{-0.2cm}\su_{i,j,k}=r^{\lambda_i}\sum_{l=0}^{k}(\ln r)^l \boldsymbol{\varphi}^{(i)}_{j,k-l}(\theta), \quad
	s^p _{i,j,k}= r^{\lambda_i-1}\sum_{l=0}^{k}(\ln r)^l \xi^{(i)}_{j,k-l}(\theta),
\end{align}
$k\in\{0,\kappa_{i,j}-1\}$.
Under the assumption that the 
right-hand side has a higher regularity,
i.e., $\fb\in L^{2}_{-\alpha}(\Omega)^2$ for some $\alpha>1-\lambda$, 
together with the assumption that $\lambda_i$ are simple roots for all $i\in[1,N]$, 
there holds the following expansion of the solution near the singular point (for simplicity we skip the $j,k$ indices):
\begin{align}\label{updec}
 	\ub= \sum_{i=1}^{N} c_i\ \su_i + \mathbf{U}, \qquad p= \sum_{i=1}^{N} c_{i}\ s^p _{i} + P,
\end{align}
where $(\Ub,P)\in H^2_{-\alpha}(\Omega)^2\times H^1_{-\alpha}(\Omega)$ is called a smooth remainder. For our needs, we will require a decomposition with two singular functions, i.e., $N=2$. The assumption on simple multiplicity of the eigenvalues is in that case 
violated only for one angle $\omega_2$ in the whole $(\pi,2\pi)$ range.

It is clear from the structure of the singular functions \eqref{sinfce} that the integrability and boundedness of the gradients is limited which results in a reduced regularity of the solution in domains with re-entrant corners. On the other hand, the special polynomial structure in $r$ of the singularities can be well-handled in the framework of the Kondratev-type Sobolev spaces. Hence, one can obtain a generalized regularity result 
\begin{align}\label{AESt}
	\|\ub\|_{2,\beta} + \|p\|_{1,\beta} \lesssim \|\fb\|_{0,\beta},
\end{align}
see, e.g., \cite{Guo2006}. The explicit form of the singular parts of the solution $(\ub,p)$, decomposition \eqref{updec}, a~priori bounds following from the structure of the singularities and the smooth remainder, together with regularity \eqref{AESt} are then used in the formulation and the proof of the convergence behavior of the energy-corrected methods, as they are stated below.
\smallskip

\subsection{Pollution effect}
It is well-known that the influence of the non-convex corners significantly reduces the global accuracy of standard numerical methods. More precisely, standard Galerkin approximations on a sequence of uniformly refined meshes 
of the Stokes problem are, in general, of the order:
\begin{align}\label{pol}
	\|\nabla(\ub-\uh)\|_0 + h^{-\lambda_1}\|\ub-\uh\|_0+\|p-\ph\|_0=\mathcal{O}(h^{\lambda_1}),
\end{align}

\noindent cf. \cite{Arnold1984,Blum1990,Girault1986}. This global effect, which is commonly referred to as \emph{pollution} cannot be cured by considering different error norms. Also, it is exhibited by all standard (piecewise polynomial) approximations, independently of the approximation order. For the illustration of the pollution effect, we present the errors and convergence rates for the stable Taylor--Hood element in Tab.~\ref{T:polTH}, where we compare a typical global pollution in the standard approximation with the non-polluted interpolation; cf. also Fig.~\ref{F:polTH} for some illustration.
\begin{table}[ht!]
	\setlength\tabcolsep{3pt}
	\centering
	\small{\begin{tabular}{| c | c | c | c || c | c | c | c |}
	\hline &&&&&&& \\[-1.0em]
	$\| \ub - \uh \|_0$ & eoc & $\| p - p_h \|_0$ & eoc & $\| \ub - \uh \|_{0,\alpha}$ & eoc & $\| p - p_h \|_{0,\alpha}$ & eoc \\ \hline\hline
	6.24297e--02 & --   &  3.76799e--01 & --   &  2.94081e--02  & --   &  4.73148e--01 &  --  \\
	3.03536e--02 & 1.04 &  2.00709e--01 & 0.91 &  9.38624e--03  & 1.65 &  1.72502e--01 &  1.46\\
	1.20753e--02 & 1.33 &  1.27266e--01 & 0.66 &  3.21263e--03  & 1.55 &  7.58580e--02 &  1.19\\
	4.90503e--03 & 1.30 &  8.06119e--02 & 0.66 &  1.36660e--03  & 1.23 &  3.50865e--02 &  1.11\\
	2.06841e--03 & 1.25 &  5.23079e--02 & 0.62 &  6.20441e--04  & 1.14 &  1.65028e--02 &  1.09\\
	9.04139e--04 & 1.19 &  3.47209e--02 & 0.59 &  2.88024e--04  & 1.11 &  7.78711e--03 &  1.08\\
	4.06348e--04 & 1.15 &  2.33942e--02 & 0.57 &  1.34837e--04  & 1.09 &  3.67251e--03 &  1.08\\
	\hline
	\end{tabular}}
	\caption{Errors and the convergence rates of the standard Galerkin approximation in standard and weighted norms ($\alpha=1-\lambda_1$) in the case of the L-shape domain (see Fig.~\ref{F:domains} third from left) using the (higher-order) Taylor--Hood element, computed for an exact solution $(\su_1 + \su_2, s^p_1 + s^p_2 - |\Omega|^{-1} \langle s^p_1 + s^p_2, 1 \rangle_\Omega)$. Estimated convergence rates of the errors in standard norms are for the velocity: $2\lambda_1 = 1.09$ and the pressure: $\lambda_1=0.54$.}
	\label{T:polTH}
\end{table}
\begin{figure}[ht!]
	\vspace*{-0.7cm}
	\centering
	\includegraphics[width=0.4\textwidth]{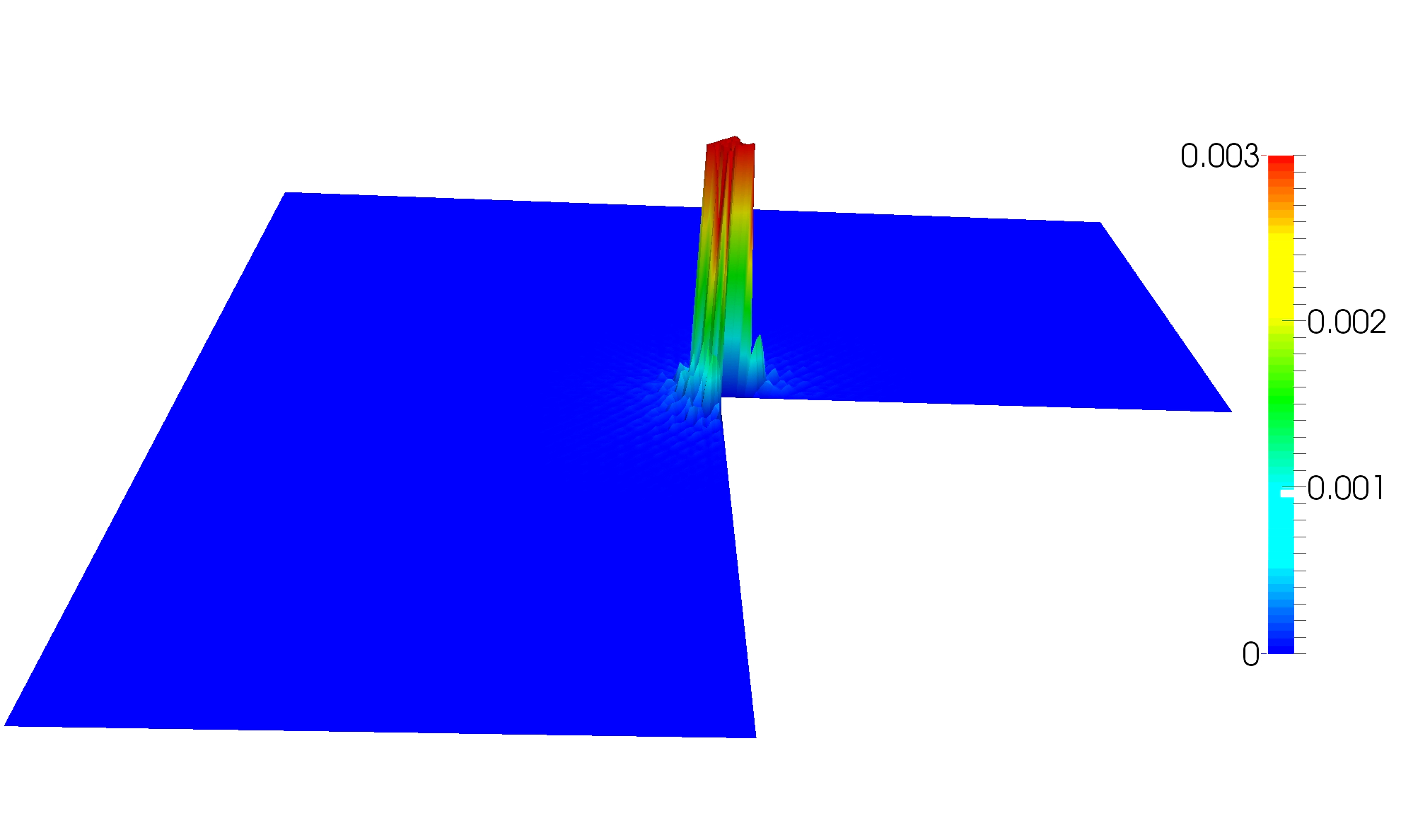}
	\hspace{0.05\textwidth}
	\includegraphics[width=0.4\textwidth]{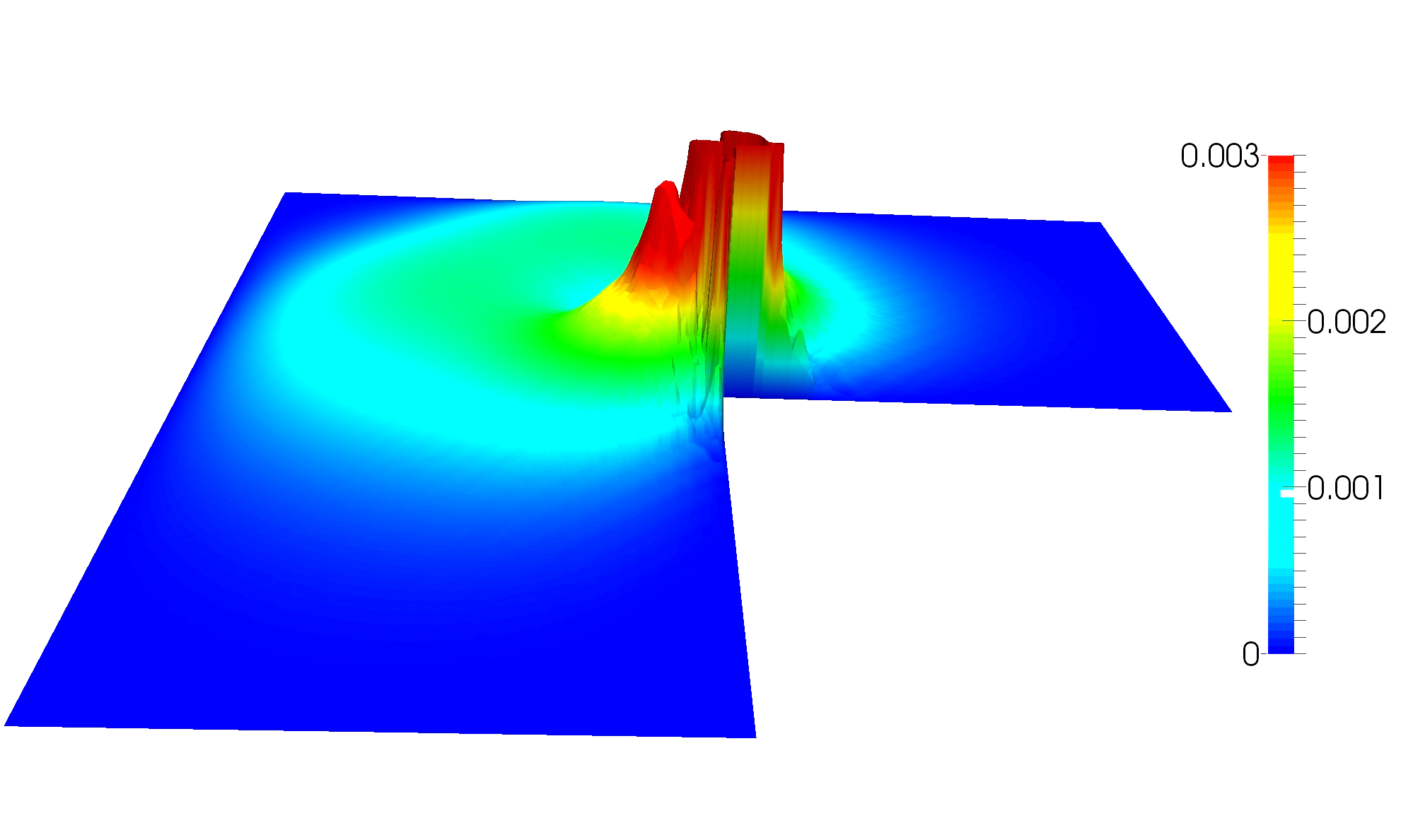}
	\caption{Illustration of the global propagation of the pollution in a L-shape domain with exact solution: $(\su_1, s^p_1 - |\Omega|^{-1} \langle s^p_1, 1 \rangle_\Omega)$ for the Taylor--Hood element; left: non-polluted  $|\su_1-I_h^2 \su_1|$, right: $|\su_1 - \su_{1,h}|$ on level $\ell = 4$.}
	\label{F:polTH}
\end{figure}

\subsection{The energy-corrected scheme}
Let us now focus on the a~priori energy-modification for the stabilized scheme. We closely follow the theory 
given in \cite{John2015}, thus it serves as a reference for additional information or rigorous proofs for the presented statements. In view of this approach, we define for a given correction vector $\gam=(\gamma_1,\gamma_2)^\top$, at this point not closely specified, a mesh-dependent bilinear form $a_h(\cdot,\cdot)$ for $\vh,\wh\in \Vh$ by:
\begin{equation}\label{defah}
	\begin{aligned}
		a_h(\wh,\vh)&\coloneqq  a(\wh,\vh) - d_h(\wh,\vh)\\
		&=\sum_{T\in \mathcal{T}_h\setminus(\mathcal{L}^1_h\cup\mathcal{L}^2_h)}a_T(\wh,\vh) + \sum_{i=1,2}(1-\gamma_i) \sum_{T\in\mathcal{L}^i_h}a_T(\wh,\vh),
	\end{aligned}
\end{equation}
where $a_T(\wh,\vh)\coloneqq\int_T\nabla \wh:\nabla\vh\,dx$,  $\mathcal{L}_h^i$ are defined as sets of elements which are in the $i-$th element layer around the singular point, and a stabilization term 
\begin{equation}\label{defch}
	\begin{aligned}
		c_h(q_h,z_h)&\coloneqq \sum_{T\in \mathcal{T}_h\setminus(\mathcal{L}^1_h\cup\mathcal{L}^2_h)} c_T(q_h,z_h) + \sum_{i=1,2}\frac{1}{1-\gamma_i}\sum_{T\in\mathcal{L}^i_h}c_T(q_h,z_h).
	\end{aligned}
\end{equation}
The stabilized energy-corrected finite element approximation to \eqref{vastab} is then formulated for the bilinear forms $a_h(\cdot,\cdot)$ and $c_h(\cdot,\cdot)$. Namely, we find $(\uhm, \phm) \in\Vh \times \Qh$ such that
\begin{equation}\label{modvastab}
	\begin{alignedat}{4}
		&a_h(\uhm,\vh) &&+ b(\vh,\phm) &&=\langle\fb,\vh\rangle_\Omega \qquad &&\forall \vh\in \Vh,\\
		&b(\uhm,\qh) &&- c_h(\qh, \phm) &&= \langle\fst,\qh\rangle_\Omega
		&&\forall \qh\in \Qh.
	\end{alignedat}
\end{equation}
By definition \eqref{defah}, and \eqref{defch}, the bilinear forms $a_h(\cdot,\cdot)$ and $c_h(\cdot,\cdot)$ differ from the ones in the standard stabilized Galerkin approximation only on an $\mathcal{O}(h)-$neighborhood of the singular point.

To measure the impact of the pollution in the $L^2-$approximation error, we define the energy defect function (also called pollution function) of the discrete scheme as:
\begin{align}\label{ghw}
	\hspace*{-0.4cm}g_h (\ub,p)\!\coloneqq a(\ub\!-\!\uhm,\ub\!-\!\uhm)\! +\! 2 b(\ub\!-\!\uhm,p\!-\!\phm)\! -\! d_h(\uhm,\uhm)\! -\! c_h(\phm,\phm).
\end{align}
Note, that the pollution function $g_h$, as defined in \eqref{ghw}, can be expressed in the case of a stable finite element formulation of the incompressible homogeneous Dirichlet problem by:
\begin{align}\label{ghwef}
	g_h(\ub,p) = a(\ub,\ub) - a_h(\uhm, \uhm),
\end{align}
hence the name energy defect function. Also, since the correction will be constructed in such a way that the pollution $g_h(\ub,p)$ is controlled, we speak about energy-corrected finite element schemes. 

\subsection{Theoretical estimates}
Let us present here the main result from \cite{John2015}, which was proven for stable linear approximations (more precisely for the \piso element). If explicitly mentioned, we additionally require a local symmetry in angular direction of the mesh around the re-entrant corner; this condition is denoted by (Sh). Note also, that although the main result is formulated for maximal interior angles in the range of  $\omega\in(\pi,\omega_3)$, the performance of the modification was shown numerically for all $\omega\in(\pi,2\pi)$. One can verify that 
\begin{align}
	\| \ub - \uhm\|_{0,\alpha}  = \mathcal{O}(h^2) \iff g_h(\ub,p) =\mathcal{O}(h^2),
\end{align}
and thus, the following theorem represents the sufficient condition for the optimal convergence. The necessary condition, i.e., the implication from left- to right-hand side, is rather trivial to prove and free of additional assumptions.
\begin{theorem}[Optimal order convergence]\label{thmmain}
	Let $\omega\in(\pi,\omega_3)$, $\omega\neq\omega_2$, $\fb\in L^2_{-\alpha}(\Omega)^2$ for some $\alpha$ such that $\alpha>1-\lambda_1$, let $(\Vh,\Qh)$ be a stable pair with $c_h(\cdot,\cdot)=0$, and, additionally, let (Sh) hold if $\omega\in[\omega_1,\omega_3)$. Moreover,  let $\gam\in\mathbb{R}^2$ be such that 
	\begin{align}\label{conv}
		g_h(\sui,\spi)  =\mathcal{O}(h^2), \quad \text{ for } \left\{
		\begin{array}{ll} 
		i=1 & \text{ if }\omega\in(\pi,\omega_2),\\
		i=1,2 & \text{ if }\omega \in (\omega_2, \omega_3),
		\end{array} \right.
	\end{align}
	where $\omega_1$, $\omega_2$ and $\omega_3$ are defined as in Fig~\ref{F:roots}. Then, the modified Galerkin approximation $(\uhm,\phm)\in\Vh\times\Qh$ converges optimally, i.e.,
	\begin{align}\label{optest}
		\|\ub-\uhm\|_{0,\alpha}\lesssim h^2 \|\fb\|_{0,-\alpha}, \quad \text{and} \quad
		\|\ub-\uhm\|_{1,\alpha}+\|p-\phm\|_{0,\alpha}\lesssim h \|\fb\|_{0,-\alpha}.
	\end{align}
\end{theorem}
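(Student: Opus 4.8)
The plan is to follow the general duality/Aubin--Nitsche strategy for weighted $L^2$ estimates, but with the twist that the energy-corrected bilinear form $a_h$ replaces $a$ only on the $\mathcal{O}(h)$-neighborhood $\mathcal{L}^1_h\cup\mathcal{L}^2_h$. First I would establish the ``energy'' estimate: by Galerkin orthogonality for the modified form \eqref{modvastab} against the exact solution of \eqref{stokes}, the total error splits into a best-approximation part and a consistency part coming from $d_h$ and $c_h$. Using the decomposition \eqref{updec} of $(\ub,p)$ into the two singular functions $\su_i,s^p_i$ plus a smooth remainder $(\Ub,P)\in H^2_{-\alpha}\times H^1_{-\alpha}$, I would bound the remainder contribution by standard interpolation in weighted Sobolev spaces (here the regularity bound \eqref{AESt} and shape-regularity are what is needed), so that the whole difficulty is concentrated in the singular part and, more precisely, in controlling the pollution function $g_h(\su_i,s^p_i)$. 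By hypothesis \eqref{conv} this is $\mathcal{O}(h^2)$ for the relevant indices, and via the stated equivalence $\|\ub-\uhm\|_{0,\alpha}=\mathcal{O}(h^2)\iff g_h(\ub,p)=\mathcal{O}(h^2)$ this already delivers the first estimate in \eqref{optest} once one checks that $g_h$ depends on the singular data only through the $c_i$, which are bounded by $\|\fb\|_{0,-\alpha}$.

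Second, for the $H^1_\alpha$- and pressure estimates, I would run a perturbed C\'ea argument in the natural mesh-dependent norm $\|\vh\|_h^2\coloneqq a_h(\vh,\vh)+c_h(q_h,q_h)$ (for the stable pair the $c_h$-part is absent). The key auxiliary facts are: (i) $a_h$ is uniformly coercive and continuous on $\Vh$ — this holds because $1-\gamma_i$ stays in a fixed interval bounded away from $0$ and $\infty$ for the finite set of admissible $\gam$ satisfying \eqref{conv}, and because only finitely many element layers are modified; (ii) a uniform discrete inf-sup/LBB bound for $b$ together with $(\Vh,\Qh)$ being a stable pair (or, in the stabilized case, the Brezzi--Pitk\"aranta-type stability of $c_h$); and (iii) a weighted interpolation estimate $\|\ub-I_h\ub\|_{1,\alpha}+\|p-\Pi_h p\|_{0,\alpha}\lesssim h\|\fb\|_{0,-\alpha}$, which again reduces to \eqref{AESt} for the smooth remainder and to an explicit computation on $r^{\lambda_i}\varphi(\theta)$ for the singular parts. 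Combining these gives the $\mathcal{O}(h)$ bound for the $H^1_\alpha$-error and the pressure up to a term involving $d_h(\uhm,\uhm)$ and $c_h(\phm,\phm)$, which is exactly $g_h$ minus the already-controlled $a$-error, hence also $\mathcal{O}(h)$ (in fact $\mathcal{O}(h^2)$).

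The main obstacle I anticipate is \emph{not} the coercivity or the interpolation bookkeeping but the passage from the pointwise hypothesis \eqref{conv} on $g_h(\su_i,s^p_i)$ to control of $g_h(\ub,p)$ for the \emph{full} solution, together with the role of the local angular symmetry assumption (Sh). One has to show that the cross terms between distinct singular functions, and between the singular part and the smooth remainder, in the bilinear expression \eqref{ghw} are of higher order; this is where the weight exponent $\alpha>1-\lambda_1$ enters critically (it must be large enough to make $(\Ub,P)$ lie in $H^2_{-\alpha}\times H^1_{-\alpha}$, yet the weighted norms on the right of \eqref{optest} must still dominate the singular contributions), and it is also where the restriction $\omega<\omega_3$ is used, since for $\omega\ge\omega_3$ a third singular function ($\lambda_3$) would be needed in \eqref{updec} and the two-parameter correction $\gam=(\gamma_1,\gamma_2)$ would no longer suffice. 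The symmetry (Sh) is needed for $\omega\in[\omega_1,\omega_3)$ precisely to kill otherwise-present odd cross terms between $\su_1$ and $\su_2$ on the modified layers. Since the full rigorous treatment of these cross terms is carried out in \cite{John2015}, I would invoke that analysis for the delicate estimates and concentrate the write-up on assembling (i)--(iii) above into \eqref{optest}.
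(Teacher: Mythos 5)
Your proposal follows essentially the same route as the paper's own argument: a duality-based weighted $L^2$ estimate built on the decomposition \eqref{updec}, with the smooth remainder controlled by weighted regularity and interpolation, the singular contributions controlled by the hypothesis \eqref{conv} on $g_h$, and the cross terms --- handled via the local symmetry condition (Sh) and generalized Wahlbin-type arguments, and tied to the thresholds $\omega_1$, $\omega_2$, $\omega_3$ exactly as you describe --- correctly identified as the technically delicate point that the paper likewise defers to \cite{John2015}. No substantive discrepancy.
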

First of all, let us describe the assumptions in Theorem~\ref{thmmain} and the proof-technique, after we shall discuss its generalization to the stabilized case. 

At the beginning, we would like to point out here, that $\gam=(\gamma_1,\gamma_2)^\top\in\mathbb{R}^2$ is constrained such  that the bilinear forms
 $a_h(\cdot,\cdot)$ and $c_h(\cdot,\cdot) $ remain uniformly elliptic and continuous, respectively. This means, since the element layers are disjoint, we actually require $\gam\in\mathcal{B}^\infty_R(0)$ ($\mathcal{B}^\infty_R(0)$ being a ball with center in the origin of radius $R<1$ in the $l^\infty$ norm). Also, if the first condition of \eqref{conv} has to be satisfied, i.e., $\omega\in(\pi,\omega_2)$, we represent the vector $\gam$ for simplicity of notation by $\gam = (\gamma_1,0)^\top.$ In that case, we speak about a single- (or one-) parameter correction modification. Such a case largely inherits the properties of the modification as derived in \cite{Egger2014} for the Laplace equation.

The proof of \eqref{optest} is based on duality arguments and strongly relies on the linearity of the system. Under the assumption, that the right-hand side is of increased regularity, the decomposition \eqref{updec} can be used on the exact and the approximative solutions (note that the eigenvectors $(\sui,\spi)$ are solutions of the Stokes equations themselves), and thus, the pollution in the system can be split into several parts, namely pollution caused by the singular functions, terms corresponding to the smooth remainder and the so-called cross-terms. By  standard duality arguments, a uniform inf-sup condition with respect to weighted norms, a priori bounds of the  smooth remainder  in negatively weighted norms, as introduced by \cite{Blum1990}, one can show that the terms with smooth remainder are of second order, and thus they do not pollute. The same holds for the cross-terms. However, the proof is   technically more involved.
More precisely, since the orthogonality of the eigenvectors is, in general, not preserved by the finite element approximations, we have to apply generalized Wahlbin arguments and use the local symmetry condition (Sh) on the mesh. Note here, that the drop of (Sh) for small angles comes from the sufficient regularity of the second singular function, and thus the symmetry arguments do not have to be used. The contribution to the pollution $g_h$ by the singular functions cannot be neglected, therefore it is enforced by the modification itself. This exactly reflects the condition \eqref{conv}. As one can see, for angles smaller as $\omega_2$ we have for the second singular function: $(\su_2,s^p_2)\in H^2_{-\varepsilon}(\Omega)^2\times H^1_{-\varepsilon}(\Omega)$, and thus its pollution is again a~priori of second order. This means, it does not have to be corrected. The correction vector $\gam$ is then reduced to a single scalar value, as noted above.

In Theorem~\ref{thmmain}, the weight $\alpha$ is bounded from below which originates in the assumption on the decomposition \eqref{updec}. Note, that also for the results discussed in \cite{John2015}, some additional assumptions on the upper bound are specified. These are mainly for technical reasons, namely restricting the number of singular functions in the decomposition and to ensure the validity of some auxiliary lemmas.

Let us now turn our attention to the extension of Theorem~\ref{thmmain} to stabilized finite elements. Our approach is motivated by the equivalence of the MINI and a stabilized $P_1$--$P_1$ element. For that, we define a space of element-wise bubble functions:
\begin{align}
	\mathbf{B}_h = \left\{ v_h \in  \mathcal{C}(\overline{\Omega}) : v_h|_T \in \text{span}\{\lambda_{1}^T,\lambda_{2}^T,\lambda_{3}^T\}\ \forall T \in \mathcal{T}_h \right\}^2,
\end{align}
where $\lambda_{i}^T$, $i=1,\dots,3$, denote the barycentric coordinates on the element $T$, see also \cite{Arnold1984}. The MINI element is then defined via the product $(\Vh \oplus \mathbf{B}_h) \times \Qh$.  The equivalence of the MINI element and the stabilized one, as defined above, (see \cite{brezzi-bristeau-franca-mallet-roge_1992} for the standard case) holds also for the case of the energy corrected scheme if $c_h(\cdot,\cdot)$ is selected appropriately. Namely, we have the following.
\begin{theorem}\label{equiv_MINI_pspg}
	Let $\fb$ be piecewise constant on each element $T \in \mathcal{T}_h$. Then the energy corrected MINI element discretization is equivalent to the energy corrected $P_1$--$P_1$ discretization with \textnormal{PSPG} stabilization \eqref{modvastab} where the stabilization parameter $\delta_T=\frac{c_{1,T}\, c_{2}}{\alpha_T}$. Further, the element-wise constants are determined by
	\begin{align}
		\langle \phi_T, 1 \rangle_T = c_{1,T}\, h_T^2, \qquad \langle \phi_T, 1 \rangle_T = c_{2}\, |T|, \qquad \langle \nabla \phi_T, \nabla \phi_T \rangle_T = \alpha_T,
	\end{align}
	where $\phi_T \in B_h|_T$ denotes the bubble function, scaled to a maximal value of one, on each element $T \in \mathcal{T}_h$, and $c_2=\frac{9}{20}$.
\end{theorem}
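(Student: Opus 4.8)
The plan is to carry out the static-condensation argument underlying the classical equivalence MINI $\equiv$ stabilized $P_1$--$P_1$ of \cite{brezzi-bristeau-franca-mallet-roge_1992}, taking care of where the layer factors $1-\gamma_i$ land. Write the MINI velocity as $\uhm+\mathbf{u}_h^b$ with $\uhm\in\Vh$, $\mathbf{u}_h^b\in\mathbf{B}_h$, split the velocity test functions accordingly, and on each $T\in\mathcal{T}_h$ abbreviate the local scaling of the corrected form by $\mu_T$, i.e.\ $\mu_T=1-\gamma_i$ for $T\in\mathcal{L}^i_h$ and $\mu_T=1$ otherwise, so that the restriction of $a_h$ to $T$ is $\mu_T$ times the standard $a_T$. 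First I would record the decoupling of $\Vh$ from $\mathbf{B}_h$: since $\nabla\vh$ is elementwise constant while $\int_T\nabla\phi_T\,dx=\int_{\partial T}\phi_T\,\mathbf{n}\,ds=0$ (the bubble vanishes on $\partial T$), we get $a_h(\mathbf{u}_h^b,\vh)=a_h(\wh,\vh^b)=0$ for all $\vh,\wh\in\Vh$ and all bubbles. In particular the momentum equation tested against $\vh\in\Vh$ is already exactly the first line of \eqref{modvastab} (the term $b(\vh,\phm)$ is common to both formulations), so nothing has to be shown there; what remains is the elimination of $\mathbf{u}_h^b$.

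Then I would do the condensation proper. On $T$ write $\mathbf{u}_h^b|_T=\boldsymbol{\beta}_T\,\phi_T$ with $\boldsymbol{\beta}_T\in\mathbb{R}^2$; the local bubble--bubble block of $a_h$ is then $\mu_T\,\alpha_T$ times the identity, where $\alpha_T=\langle\nabla\phi_T,\nabla\phi_T\rangle_T$. Testing the momentum equation with $\phi_T\mathbf{e}$ (constant $\mathbf{e}$), integrating $b(\phi_T\mathbf{e},\phm)$ by parts so the boundary term drops, and using that $\phm$ is piecewise linear (hence $\nabla\phm|_T$ constant) together with the hypothesis that $\fb$ is piecewise constant, every integral collapses to a product of constants times $\langle\phi_T,1\rangle_T$ or $\alpha_T$; solving yields
\[
	\boldsymbol{\beta}_T=\frac{\langle\phi_T,1\rangle_T}{\mu_T\,\alpha_T}\bigl(\fb|_T-\nabla\phm|_T\bigr).
\]

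Next I would substitute this into the bubble part of the continuity equation, $b(\mathbf{u}_h^b,\qh)=\sum_T\langle\phi_T,1\rangle_T\,\boldsymbol{\beta}_T\cdot\nabla\qh|_T$. Using $\langle\phi_T,1\rangle_T^2=(c_{1,T}h_T^2)(c_2|T|)$ and $\int_T\nabla w\cdot\nabla\qh\,dx=|T|\,\nabla w|_T\cdot\nabla\qh|_T$ for piecewise linear $w$, the pressure--pressure part becomes $\sum_T\mu_T^{-1}\delta_T h_T^2\int_T\nabla\phm\cdot\nabla\qh\,dx$ with $\delta_T=c_{1,T}c_2/\alpha_T$, which is precisely $c_h(\phm,\qh)$ of \eqref{defch} with the \textnormal{PSPG} $c_T$, while the $\fb$-part becomes $-\sum_T\mu_T^{-1}\delta_T h_T^2\int_T\fb\cdot\nabla\qh\,dx=\langle\fst,\qh\rangle$. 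Hence the condensed continuity equation is the second line of \eqref{modvastab} with the asserted $\delta_T$. The constant $c_2$ is then read off by direct quadrature: the bubble scaled to unit maximum is $\phi_T=27\,\lambda_1^T\lambda_2^T\lambda_3^T$ (value $1$ at the barycenter), and $\int_T(\lambda_1^T)^a(\lambda_2^T)^b(\lambda_3^T)^c\,dx=\tfrac{a!\,b!\,c!}{(a+b+c+2)!}\,2|T|$ with $a=b=c=1$ gives $\langle\phi_T,1\rangle_T=27\cdot\tfrac{1}{5!}\cdot 2|T|=\tfrac{9}{20}|T|$, i.e.\ $c_2=\tfrac{9}{20}$.

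I do not expect a genuine obstacle — the computation is essentially bookkeeping — but the one point that needs care is the matching of the correction factors. The corrected form $a_h$ carries $1-\gamma_i$ on $\mathcal{L}^i_h$, so the condensed coefficient $\boldsymbol{\beta}_T$, and hence the induced pressure stabilization, carries the reciprocal $(1-\gamma_i)^{-1}$ on that layer, which is exactly the weight built into the definition \eqref{defch} of $c_h$; this is what makes the conclusion an honest equality rather than an equivalence only up to modified coefficients. Positivity $\delta_T>0$ is automatic since $\mu_T=1-\gamma_i>0$, which is guaranteed by the constraint $\gam\in\mathcal{B}^\infty_R(0)$, $R<1$, imposed for ellipticity of $a_h$. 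Finally, the piecewise-constant hypothesis on $\fb$ serves only to keep $\langle\fb,\phi_T\mathbf{e}\rangle_T$ exactly equal to $\fb|_T\cdot\mathbf{e}\,\langle\phi_T,1\rangle_T$; for general $\fb$ the same identities hold with $\fb$ replaced by its elementwise $L^2$-average, the customary caveat for this equivalence.
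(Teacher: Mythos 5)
Your proposal is correct and follows essentially the same static-condensation route as the paper's proof: $A$-orthogonality of linear and bubble parts, testing with $\phi_T\mathbf{e}_i$, solving for the bubble coefficient with the $(1-\gamma_T)^{-1}$ factor, and substituting into the continuity equation to recover the PSPG form $c_h$ with the reciprocal layer weights. The only addition is your explicit quadrature verification of $c_2=\tfrac{9}{20}$, which the paper states without computing and which checks out.
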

\begin{proof}
	Let us consider the energy corrected MINI finite element formulation, and denote the solution by $\ub_h = \ub_h^l + \ub_h^b$, where $\ub_h^l \in \Vh$ and $\ub_h^b = \sum_{T \in \mathcal{T}_h} \ub_{b,T} \phi_T \in \mathbf{B}_h$, with $\mathbf{u}_{b,T} \in \mathbb{R}^2$.  Note that the linear and bubble functions are $A$--orthogonal, i.e., $a(\vh^l, \phi_T\, \mathbf{e}_i) = 0$, $i = 1,2$, for all $\vh^l \in \Vh^1$ and $\phi_T \in B_h$. Let us consider the test function $\vh = \phi_T\, \mathbf{e}_i$, $i = 1, 2$, on $T$, then for the first equation of the variational formulation \eqref{modvastab}, we obtain that
	\begin{align}
		(1 - \gamma_T)\, a(\phi_T\, \ub_{b,T}, \phi_T\, \mathbf{e}_i) + b(\phi_T\, \mathbf{e}_i, p_h) = \langle \fb, \phi_T\, \mathbf{e}_i \rangle_T,
	\end{align}
	where $\gamma_T$ represents the correction parameter on the individual element. Note, $\gamma_T = 0$ for $T\in\mathcal{T}_h\setminus(\mathcal{L}^1_h\cup\mathcal{L}^2_h)$. Using integration by parts and the fact that $\phi_T = 0$ on $\partial T$ we have:
	\begin{align}
		(\ub_{b,T} \cdot \mathbf{e}_i) (1 - \gamma_T) \langle \nabla \phi_T, \nabla \phi_T \rangle_T = \langle \fb - \nabla p_h, \phi_T\, \mathbf{e}_i \rangle_T.
	\end{align}
	Then, applying the assumption of a piecewise constant $\fb$, we get that
	\begin{align}
		\ub_{b,T} = \frac{1}{\alpha_T (1 - \gamma_T)} (\fb - \nabla p_h)|_T\, \langle \phi_T, 1\rangle_T.
	\end{align}
	Using this, the bilinear form $b(\cdot,\cdot)$ can be rewritten as:
	\begin{align}
		b(\phi_T\, \ub_{b,T}, \qh) &= (\ub_{b,T} \cdot \nabla \qh)|_T\, \langle \phi_T, 1 \rangle_T = \frac{1}{\alpha_T (1 - \gamma_T)} (\fb - \nabla p_h) \cdot \nabla \qh|_T\, \langle \phi_T, 1\rangle_T^2\\
		&= \frac{c_{1,T}\, c_{2}}{\alpha_T (1 - \gamma_T)} h_T^2 \langle \fb - \nabla p_h, \nabla \qh \rangle_T,
	\end{align}
	and thus
	\begin{align}
		b(\ub_h^l, \qh) - \sum_{T \in \mathcal{T}_h} \frac{1}{ 1 - \gamma_T} \delta_T\, h_T^2 \langle \nabla p_h, \nabla \qh \rangle_T = -\sum_{T \in \mathcal{T}_h} \frac{1}{ 1 - \gamma_T} \delta_T\, h_T^2 \langle \fb, \nabla \qh \rangle_T,
	\end{align}
	which corresponds to the PSPG stabilization.
\end{proof}

In the case that the computational domain is the L-shape domain with a mesh as depicted by the third one from the left in Fig.~\ref{F:domains}, the stabilization parameter $\delta_{T} $ is independent of the element and equal to $1/160$.
The equivalence from Lemma~\ref{equiv_MINI_pspg} also motivates the extension to other stabilizations, like the Dohrman--Bochev approach \cite{bochev-dohrmann_2004} or the local projection stabilization \cite{ganesan-matthies-tobiska_2008}. In that case we proceed as before, as we scale the bilinear form $c_h(\cdot, \cdot)$ on each element $T \in \mathcal{L}_h^i$ by the term $(1 - \gamma_i)^{-1}$, for $i=1,2$. We note that this is the inverse scaling of the bilinear form
$a_h(\cdot,\cdot)$.

\subsection{Identification of the correction vector $\gam$}
Up to this point, we assumed that the vector $\gam$, chosen such that $g_h(\ub,p)=\mathcal{O}(h^2)$, is known and exists. If this is the case, then the practical realization into  an existing code is straightforward, since it only requires a simple re-scaling of a few local stiffness matrices.

The numerical determination of the unknown $\gam$ was first proposed for the Laplace equation in \cite{Egger2014} and theoretically studied in \cite{Ruede2014}. It is constructed as a limit-vector in a component-wise meaning (or a limit-value), of a sequence $\{\gam_h\}_{h>0}$ solving 
\begin{align}\label{gamhcond}
	g_h(\su_i,s^p_i) = \mathbf{0},
\end{align} 
for $i=1$ if $\omega\in(\pi,\omega_2)$, and $i=1,2$ if $\omega\in(\omega_2,2\pi)$. 
We understand the notation above in a following functional-setting sense: For each $\gam\in\mathcal{B}_R^\infty(0)$, the approximation operators $R^u:H^1_0(\Omega)^2\rightarrow \Vh$ and $R^p:L^2_0(\Omega)\rightarrow \Qh$ are considered as functions of $\gam$. Thus, on each computational level, corresponding to a certain $h>0$, we seek a root $\gam_h$ of
\begin{align}
	a(\su_i,\su_i) - \left[ a_h(R^u(\gam)\su_i, R^u(\gam)\su_i) + c_h(R^p(\gam)s^p_i, R^p(\gam)s^p_i) \right],
\end{align}
again, $i$ from the same set as above. The existence and uniqueness of the sequence $\{\gam_h\}_{h>0}$ can be proved with the help of a fixed point theorem.
Scaling arguments and the properties of the energy correction function yield the convergence of $\gam_h$, and 
we denote the asymptotic value by star, i.e., $\gam^\star = (\gamma_1^\star,\gamma_2^\star)^\top = \left(\lim_{h\rightarrow0^+}\gamma_{1,h},\lim_{h\rightarrow0^+}\gamma_{2,h}\right)^{\!\!\top}$. 
Then one can show that for $\gam=\gam^\star$ the assumption \eqref{conv} is satisfied. To do so one has to construct a suitable subsequence and exploit the properties of the Jacobi matrix of $g_h$. The computation of $\gam^\star $ can be
accelerated by a combination of a nested hierarchical Newton scheme with some
extrapolation technique on a coarse element patch.

 We remark that $\gam^\star$ depends only on the interior angle $\omega$, on the local element patch and the finite element type, but not on the mesh-size nor on  the  rest of the mesh/domain. Moreover, many re-entrant corners can be handled by local modifications each of which carrying its own correction parameter.

\subsection{Numerical examples}
In this section, we consider several numerical examples for the stabilized schemes \eqref{vastab} and \eqref{modvastab}. The correction parameters are estimated as described above, see Tab.~\ref{T:ompar}, for the initial grids as depicted in Fig.~\ref{F:domains}, i.e., for the scenarios where $\omega\in\{8/7\pi,5/4\pi,3/2\pi,7/4\pi\}$. We always consider a computational setting for which $(\su_1 + \su_2, s^p_1 + s^p_2 - |\Omega|^{-1} \langle s^p_1 + s^p_2, 1 \rangle_\Omega)$ is the exact solution. Note, that for $\omega=8/7\pi$ we do not require the local symmetry of the mesh at the singular point, for both $\omega=8/7\pi, 5/4\pi$ a single-parameter correction can be used, while $\omega=3/2\pi, 7/4\pi$ cases require two correction parameters. The last case $\omega=7/4\pi$ represents the extension of the derived theory for angles bigger than $\omega_3$. For simplicity, we represent only the results for the velocity, see Tab.\ref{T:omconv}, more precisely, we show the weighted $L^2_{\alpha}(\Omega)$ error convergences and their rates. 
\begin{figure}[ht!]
	\centering
 	\includegraphics[width=0.75\textwidth]{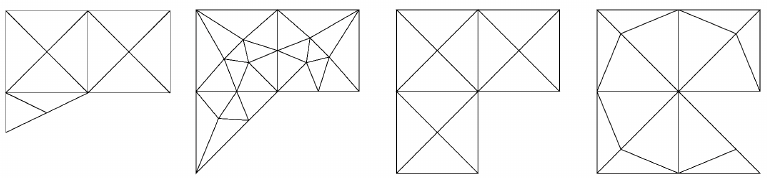}
	\caption{Initial grids for several types of the re-entrant corner, namely $\omega\in\{8/7\pi,5/4\pi,3/2\pi,7/4\pi\}$. All the geometries are bounded by a unit square, particularly $\Omega_{3/2\pi} = (-1,1)^2 \backslash ([0, 1] \times [-1, 0])$.}
	\label{F:domains}
\end{figure}
\begin{table}[ht!]
	\setlength\tabcolsep{5pt}
	\centering
	\small{\begin{tabular}{| c | c  || c | c | }
	\hline
	$\omega$ & $\gam^\star$ (single correction) & $\omega$ & $\gam^\star$ (two corrections)\\ \hline
	\hline &&& \\[-1.0em]
	$8/7 \pi$ & $(0.011364, 0.0 )^\top$ & $3/2 \pi$ & $(\phantom{-}0.161708, -0.183984)^\top$ \\ 
	$5/4 \pi$ & $(0.017676, 0.0 )^\top$ & $7/4 \pi$ & $(-0.483852, \phantom{-}0.273889)^\top$ \\
	\hline
	\end{tabular}}
	\caption{Estimated asymptotic $\gam^\star$ for the geometries and local element patches  of Fig.~\ref{F:domains}.}
	\label{T:ompar}
\end{table}
\begin{table}[ht!]
	\setlength\tabcolsep{3pt}
	\centering
	\small{\begin{tabular}{| c | c || c | c || c | c || c | c | }
	\hline
	\multicolumn{2}{|c||}{$\omega = 8/7\pi$}  & \multicolumn{2}{c||}{$\omega = 5/4\pi$} & \multicolumn{2}{c||}{$\omega = 3/2\pi$} & \multicolumn{2}{c|}{$\omega = 7/4\pi$}   \\ \hline
	\hline &&&&&&& \\[-1.0em]
	$\| \ub - \uhm \|_{0,\alpha}$ & eoc & $\| \ub - \uhm \|_{0,\alpha}$ & eoc & $\| \ub - \uhm \|_{0,\alpha}$ & eoc & $\| \ub - \uhm \|_{0,\alpha}$ & eoc  \\[0.5ex] \hline
	6.80763e--02 &   --    &  6.43853e--02  & --    & 9.87423e--02 & --       & 6.31650e--02 & --          \\
	2.21098e--02 & 1.62 &  2.36951e--02 & 1.44 & 3.76712e--02 & 1.39  & 1.79778e--02 & 1.81 \\
	6.06479e--03 & 1.87 &  6.78762e--03 & 1.80 & 1.28515e--02 & 1.55  & 4.55198e--03 & 1.98 \\
	1.59057e--03 & 1.93 &  1.81517e--03 & 1.90 & 3.48892e--03 & 1.88  & 1.06693e--03 & 2.09 \\
	4.07657e--04 & 1.96 &  4.68415e--04 & 1.95 & 8.87484e--04 & 1.97  & 2.47722e--04 & 2.11 \\
	1.03206e--04 & 1.98 &  1.18907e--04 & 1.98 & 2.19857e--04 & 2.01  & 5.82728e--05 & 2.09 \\
	2.59642e--05 & 1.99 &  2.99543e--05 & 1.99 & 5.37256e--05 & 2.03  & 1.41046e--05 & 2.05 \\
	\hline
	\end{tabular}}
	\caption{Errors and the convergence rates of the velocity approximation measured in the weighted norm with $\alpha = 1 - \lambda_1$.}
	\label{T:omconv}
\end{table}

To demonstrate the convergence of the pressure part of the solution, we additionally include Tab.\ref{T:Lconv} for $\omega=3/2\pi$, presenting also the rates and errors in the case of the standard $L^2(\Omega)-$norm. For the velocity, we obtain in that case the best approximation order, namely $\lambda_1+1 = 1.54$, instead of the polluted rate $2\lambda_1=1.09$, see \eqref{pol}. In the case of the pressure-error, we have increased the weight to $\alpha_p=\alpha+0.5$, to restore its superconvergence behavior.
\begin{table}[ht!]
	\setlength\tabcolsep{3pt}
	\centering
	\small{\begin{tabular}{| c | c | c | c || c | c | c | c | }
	\hline
	\multicolumn{4}{|c||}{standard norm}  & \multicolumn{4}{c|}{weighted norm}  \\ \hline
	\hline &&&&&&& \\[-1.0em]
	$\| \ub - \uhm \|_{0}$ & eoc & $\| p - \phm  \|_{0}$ & eoc & $\| \ub - \uhm \|_{0,\alpha}$ & eoc & $\| p-\phm \|_{0,\alpha_p}$ & eoc  \\[0.5ex] \hline
	4.70326e--02 &   -- & 7.92268e--01 &  --  & 9.87423e--02 & --    & 4.03685e--01 & --   \\
	3.87065e--02 & 2.81 & 4.61252e--01 & 0.78 & 3.76712e--02 & 1.39  & 2.04358e--01 & 0.98 \\
	1.88110e--02 & 1.04 & 2.30692e--01 & 0.99 & 1.28515e--02 & 1.55  & 5.03504e--02 & 2.02 \\
	7.07091e--03 & 1.41 & 1.25586e--01 & 0.87 & 3.48892e--03 & 1.88  & 1.83549e--02 & 1.46 \\
	2.51514e--03 & 1.49 & 6.98617e--02 & 0.84 & 8.87484e--04 & 1.97  & 6.74790e--03 & 1.44 \\
	8.78492e--04 & 1.51 & 3.97820e--02 & 0.81 & 2.19857e--04 & 2.01  & 2.37574e--03 & 1.50 \\
	3.04097e--04 & 1.53 & 2.33040e--02 & 0.77 & 5.37256e--05 & 2.03  & 8.20768e--04 & 1.53 \\
	\hline
	\end{tabular}}
	\caption{Convergence results of the velocity and pressure modified approximation measured in the standard and weighted norms, with minimal admissible value $\alpha = 1-\lambda_1$ and increased weight for pressure $\alpha_p=\alpha+0.5$ to restore the superconvergence, in the case of L-shape domain.}
	\label{T:Lconv}
\end{table}

Finally, we return to the motivational problem of Fig.~\ref{F:polTH}, i.e., a higher order approximation, and illustrate on the left  of Fig.~\ref{F:polmodTH} the reduction of the pollution by the energy-corrected scheme, as derived above, for the same computational setting. On the right, we show the solution of an optimal control problem on a domain with multiple (50) re-entrant corners. Here at each corner, the modified scheme can be applied locally with the same correction parameters as for the L-shape domain. Many quantities of interest such as stress intensity factors, eigenvalues or the flow rate can then accurately be determined on uniformly refined meshes, and no mesh grading techniques are required.
 
\begin{figure}[ht!]
	\hspace*{0.2cm}
  \begin{minipage}[c]{0.45\textwidth}
    \includegraphics[width=0.85\textwidth]{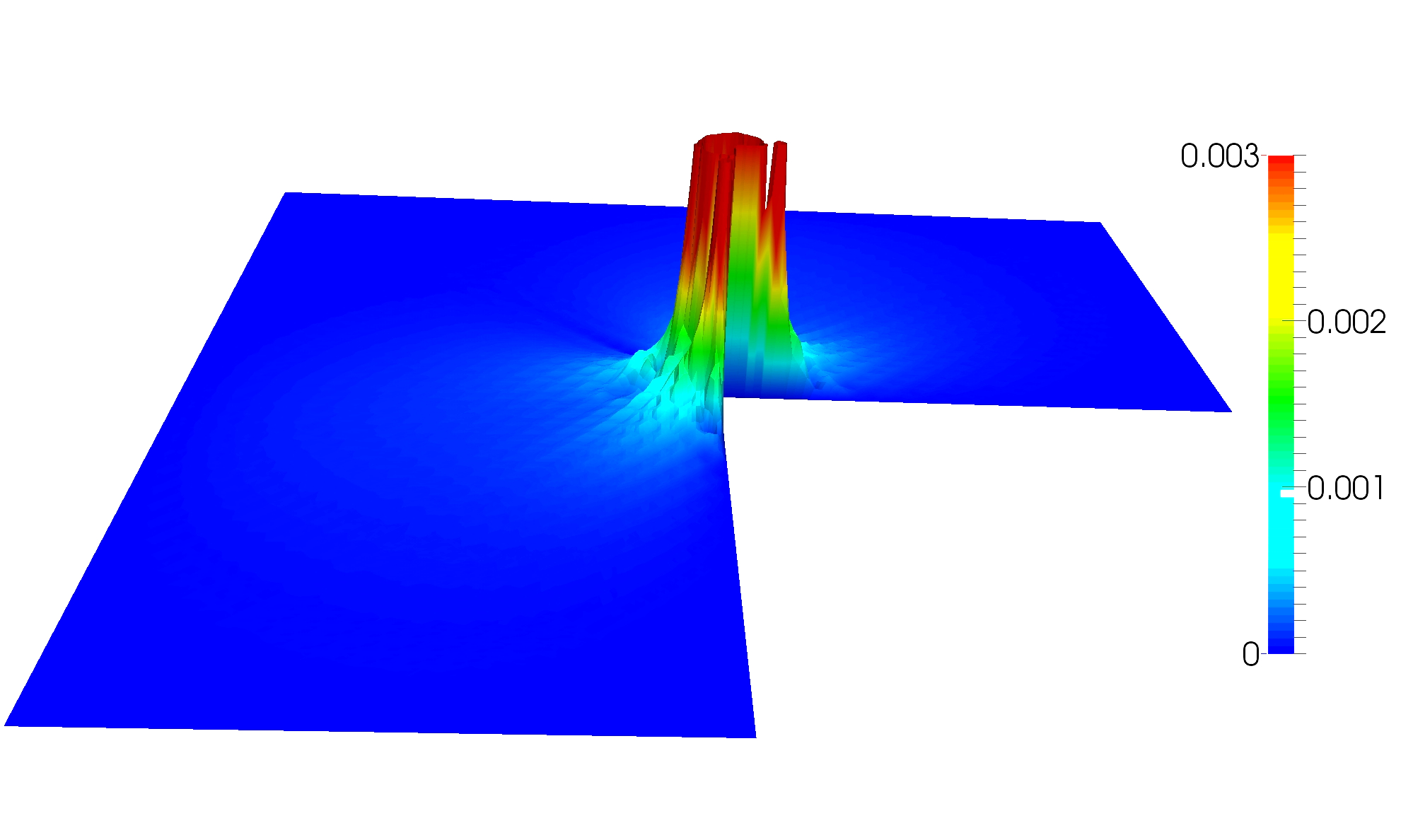}
  \end{minipage}
  \begin{minipage}[c]{0.55\textwidth}
   \includegraphics[width=\textwidth]{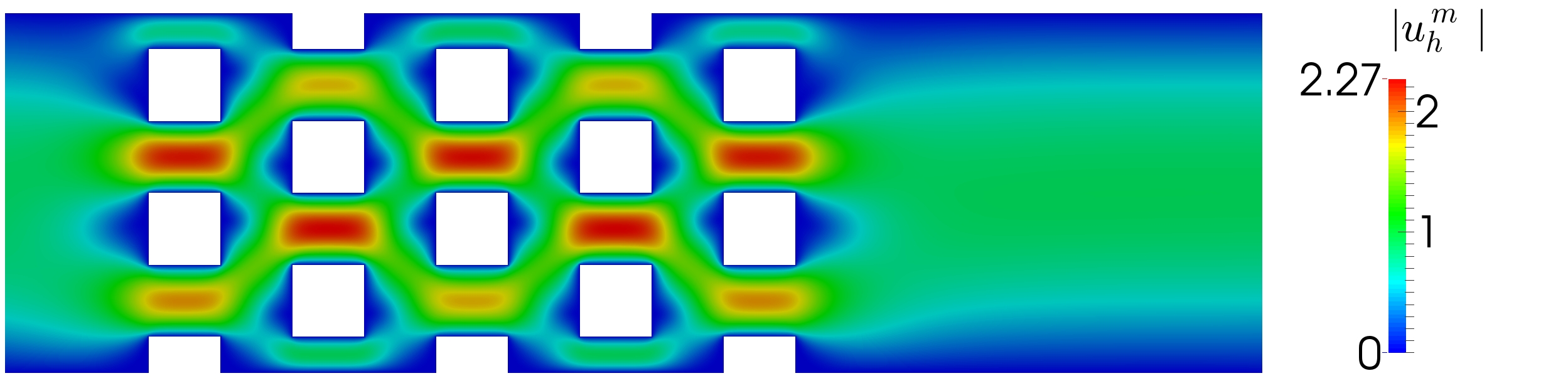}
  \end{minipage}
 \caption{Left: Reduced pollution effect for the modified Taylor--Hood scheme
(compare with Fig.~\ref{F:polTH}); Namely we plot $|\ub-\uhm|$. 
Right: Multiple re-entrant corners.}
		\label{F:polmodTH}
\end{figure}

\section{Multigrid 
and fault tolerant algorithms}\label{sec:solver}
In this section, we consider the realization of efficient multigrid solvers and fault tolerant algorithms for the Stokes system. Our implementation 
employs the hierarchical hybrid grids (HHG) finite element framework, see, e.g.,
\cite{bergen-huelsemann_2004,bergen-gradl-ruede-huelsemann_2006}. 
HHG  combines the flexibility of unstructured finite element meshes with the performance advantage of structured grids in a block-structured approach.
It shows excellent scalability up to the largest supercomputers
available \cite{gmeiner-ruede-stengel-waluga-wohlmuth_2015}
and reaches parallel textbook multigrid efficiency
\cite{gmeiner-ruede-stengle-waluga-wohlmuth_2015_2}.

In the following, we denote by $n_u = {\rm dim}\, \mathbf{V}_h$ and $n_p = {\rm dim}\, Q_h$ the number of degrees of freedom for velocity and pressure, respectively. Then, the 
isomorphisms $\mathbf{u}_h \leftrightarrow \mathbf{u} \in \mathbb{R}^{n_u}$ and $p_h \leftrightarrow \mathbf{p} \in \mathbb{R}^{n_p}$ are 
obvious.
The discrete variational formulation \eqref{vastab} is then equivalent to the linear system
\begin{align}\label{eq:linear-system}
\mathcal{K}
\begin{pmatrix}
\mathbf{u} \\
\mathbf{p}
\end{pmatrix}
:=
\begin{pmatrix}
A & B^\top \\
B & -C
\end{pmatrix}
\begin{pmatrix}
\mathbf{u} \\
\mathbf{p}
\end{pmatrix} 
=
\begin{pmatrix}
\mathbf{f}\phantom{s} \\
\fstb
\end{pmatrix},
\end{align}
where $\mathcal{K} \in \mathbb{R}^{(n_u + n_p) \times (n_u + n_p)}$.
The matrix A 
has a $3\times3$ diagonal block structure 
consisting of three discrete representations of the Laplace operator. 

In the following, we describe a multigrid solver for this structure and 
a multigrid based fault tolerant solution strategy. 

\subsection{Multigrid solver}\label{subsec:multigrid}
Iterative methods for solving the saddle point system 
can e.g.~be constructed 
based on the pressure Schur complement (SCG) or on a 
preconditioned MINRES method,
see, e.g., \cite{elman-silvester-wathen_2005, verfuerth_1984}.

Here, we consider
an {\em all-at-once} multigrid method for the linear system \eqref{eq:linear-system}. 
The key point in a multigrid method for the entire saddle point problem is the construction of a suitable smoother.
We shall consider Uzawa type smoothers, see, e.g., \cite{bank-welfert-yserentant_1990, schoeberl-zulehner_2003, zulehner_2002}. 
This idea is based on a preconditioned Richardson method, where in 
iteration $k+1$ the 
system for $(\mathbf{u}_{k+1}, \mathbf{p}_{k+1})$ 
must be solved
\begin{align}\label{precond_richardson}
\hat{\mathcal{K}}
\begin{pmatrix}
\mathbf{u}_{k+1} - \mathbf{u}_k\\
\mathbf{p}_{k+1} - \mathbf{p}_k
\end{pmatrix}
=
\begin{pmatrix}
\mathbf{f}\phantom{s} \\
\fstb
\end{pmatrix}
- \mathcal{K}
\begin{pmatrix}
\mathbf{u}_k\\
\mathbf{p}_k
\end{pmatrix}.
\end{align}
Here $\mathcal{K}$ denotes the system matrix \eqref{eq:linear-system} of the Stokes equations and $\hat{\mathcal{K}}$ an appropriate preconditioner. The corresponding iteration matrix is then given by $\mathcal{M} \coloneqq I - \hat{\mathcal{K}}^{-1} \mathcal{K}$. We  note that for the convergence of \eqref{precond_richardson}, the analysis of $\mathcal{M}^k$ is needed, 
while for showing a qualitative 
smoothing property, the matrix polynomial $(I - \mathcal{M})\mathcal{M}^k$ 
must be analyzed \cite{john-wohlmuth-zulehner, schoeberl-zulehner_2003}. 
Approaches towards a full quantitative multigrid analysis
based on Fourier techniques are presented in \cite{Gaspar_2014}.
In general several choices for $\hat{\mathcal{K}}$ are available, we restrict ourselves to
\begin{align*}
\hat{\mathcal{K}} \coloneqq
\begin{pmatrix}
\hat{A} & 0 \\
B & -\hat{S}
\end{pmatrix},
\end{align*}
where $\hat{A}$ and $\hat{S}$ are suitable smoothers for the upper left block $A$ and the pressure Schur complement $S \coloneqq B A^{-1} B^\top + C$, respectively. The application of $\hat{\mathcal{K}}$ results in the algorithm of the inexact Uzawa method, where we smooth the velocity part in a first step and in a second step the pressure, i.e.
\begin{align*}
&1.\quad \mathbf{u}_{k+1} = \mathbf{u}_k + \hat{A}^{-1} (\mathbf{f} - A \mathbf{u}_k - B^\top \mathbf{p}_k),\\
&2.\quad \mathbf{p}_{k+1} = \mathbf{p}_k + \hat{S}^{-1} (B \mathbf{u}_{k+1} - C \mathbf{p}_k - \fstb).
\end{align*}
Other variants of Uzawa-type multigrid methods, such as the symmetric version, are for instance considered in \cite{schoeberl-zulehner_2003}.
In each smoothing step, we consider for the velocity smoother $\hat{A}$, a combination of a forward and backward communication reducing  variant of a Gau\ss--Seidel 
scheme with an additional row-wise red-black coloring in each of the parallel distributed  subdomains. 
Within the parallel data structures this results 
in a combined block Jacobi- and symmetric Gau\ss--Seidel updating scheme.
For the pressure, we use a SOR applied on the matrix $C$
with the under-relaxation parameter 
$\omega = 0.3$.
These smoothers are then applied within a multigrid V-cycle, where we
vary the number of smoothing steps 
from 3 pre- and 3 post smoothing steps on the finest level to 5 pre- and 5 post-smoothing steps on
coarser levels. 
This leads to a 
good efficiency.
%

As a solver on the coarsest level, several 
choices are available.
This includes parallel sparse direct solvers, such as PARDISO \cite{pardiso2}, or algebraic multigrid methods, such as hypre \cite{falgout-jones-yang_2006}.
For this 
performance study, we 
avoid the dependency
on external libraries and 
employ 
a preconditioned MINRES as 
coarse-grid solver
for the saddle point system \eqref{eq:linear-system}. 
The preconditioner consists of CG and a lumped mass matrix $M$.
The 
relative accuracy 
is set to $\varepsilon = 5 \cdot 10^{-3}$. 

\subsubsection*{Numerical examples}
In the following, we present several numerical examples, illustrating the robustness and scalability of our solver. As a computational domain, we consider the unit cube $\Omega = (0,1)^3$ with an initial mesh $\mathcal{T}_{-2}$ consisting of 6 tetrahedra. 
The coarse grid $\mathcal{T}_0$ is a 
twice refined initial grid, where each tetrahedron is decomposed into 8 new ones.
The right-hand side $\mathbf{f} = \mathbf{0}$ and for the initial guess $\mathbf{x}_0 = (\mathbf{u}_0, \mathbf{p}_0)^\top$, we distinguish between velocity and pressure part.
Here $\mathbf{u}_0$ is a 
random vector with values in $[0,1]$, while the 
initial random pressure 
$\mathbf{p}_0$ is scaled by $h^{-1}$, i.e., ~with values in $[0, h^{-1}]$. 
This is important in order to 
represent a less regular pressure, since the velocity is generally considered 
as a $H^1(\Omega)$ and the pressure as a $L^2(\Omega)$ function. 
Unless 
further specified, we 
use the relative reduction  of 
the residual by $\varepsilon = 10^{-8}$ as a stopping criteria. 
Numerical examples are performed on a low cost desktop machine, 
an Intel Xeon CPU E2-1226 v3, 3.30GHz and 32 GB shared memory
for serial computations 
and on JUQUEEN (J\"ulich Supercomputing Center, Germany)
for massively parallel computations.

\definecolor{darkgreen}{rgb}{0.125,0.5,0.169}
\begin{table}[h]
{\small
\setlength\tabcolsep{4pt}
\centering
\raisebox{.1\height}{\parbox{0.55\textwidth}{
\begin{tabular}{| c | c || c | r || c | r |}
\hline \multicolumn{2}{|c||}{}  & \multicolumn{2}{c||}{} & \multicolumn{2}{c|}{} \\[-1.em]
\multicolumn{2}{|c||}{}  & \multicolumn{2}{c||}{SCG} & \multicolumn{2}{c|}{Uzawa}\\
\hline &&&&& \\[-1.0em]
$L$ & DoF & iter & \multicolumn{1}{c||}{TtS [s]} & iter & \multicolumn{1}{c|}{TtS [s]} \\
\hline \hline
& & & & & \\[-2.5ex]
2 &  $1.4 \cdot 10^{4\, }$	& 26	&     0.11  &  9 &   0.08\\ 
3 &  $1.2 \cdot 10^{5\, }$	& 28	&     0.56  &  8 &   0.29\\ 
4 &  $1.0 \cdot 10^{6\, }$	& 28	&     3.33  &  8 &   1.79\\ 
5 &  $8.2 \cdot 10^{6\, }$	& 28	&    24.28  &  8 &  12.70\\
6 &  $6.6 \cdot 10^{7\, }$	& 31	&   205.84  &  8 &  95.85\\
7 &  $5.3 \cdot 10^{8\, }$	& \multicolumn{2}{c||}{out of mem.} &  8 & 730.77\\
\hline
\end{tabular} 
}}
\begin{minipage}{0.39\textwidth}%
\centering
\begin{tikzpicture}
  \begin{axis}[
    ybar,
    enlarge x limits=0.55,
		y label style={at={(axis description cs:-0.14,.5)},anchor=south},
    ylabel={\texttt{\#} operator evaluations},
    symbolic x coords={SCG, Uzawa},
    xtick=data,
    width = 5.5cm,
    height = 5.cm,
    ]
    \addplot[color = blue, fill = blue, fill opacity=0.5] coordinates {(SCG,464) (Uzawa,130)};
    \addplot[color = red, fill = red, fill opacity=0.5] coordinates {(SCG,72) (Uzawa,140)};
    \addplot[color = darkgreen, fill = darkgreen, fill opacity=0.5] coordinates {(SCG,36) (Uzawa,70)};
    \addplot[color = gray, fill = gray, fill opacity=0.5] coordinates {(SCG,32)};
    \legend{$A$, $B$, $C$, $M$}
  \end{axis}
\end{tikzpicture}
\end{minipage}
\caption{Iteration numbers and time-to-solution (TtS) (left) and number of operator evaluations for $L = 6$ (right) for the serial case.}
\label{T:ser}
}
\end{table}
In Tab.~\ref{T:ser}, we present results for several refinement levels $L$ and a fixed coarse grid
$\mathcal{T}_0$. 
We observe that the 
multigrid 
convergence rates are independent of the mesh size, as expected.
We also include results for the Schur complement CG method
\cite{gmeiner-ruede-stengel-waluga-wohlmuth_2015}, 
where we observe that the Uzawa multigrid solver is 
typically a factor of two faster and is more memory efficient.
The number of operator evaluations for the individual blocks, which are computed by $n_\text{op} = \sum_{\ell=0}^L 8^{\ell - L} n_{\text{op}, \ell}$, where $n_{\text{op}, \ell} \in \mathbb{N}$ denotes the number of operator evaluations on 
level $\ell$, are presented too. 
These numbers are also reflected by the time to solution of the individual solvers.\\

\begin{table}[ht]
\centering
{\small
\begin{tabular}{|r r r || l | r | c | c |}
        \hline
	\multicolumn{1}{|c}{Nodes} 	& \multicolumn{1}{c}{Threads}	& \multicolumn{1}{c||}{DoF}	& iter & TtS [s] & T$_{\!w}$ [s] & T$_{\!c}$[\%]\\ \hline \hline
	& & & & & &\\[-2.5ex]
	1		& 24			& $6.6 \cdot 10^{7\ }$	& 7 (1) 	&  65.71 \ 	&  58.80 &	10.5\\ 
	6		& 192		        & $5.3 \cdot 10^{8\ }$	& 7 (3) 	&  80.42 \ 	&  64.40 &	19.9\\
	48		& 1\,536		& $4.3 \cdot 10^{9\ }$	& 7 (5) 	&  85.28 \ 	&  65.03 &	23.7\\
	384		& 12\,288		& $3.4 \cdot 10^{10}$	& 7 (10)	&  93.19 \ 	&  64.96 &	30.3\\ 
    3\,072		& 98\,304		& $2.7 \cdot 10^{11}$	& 7 (20)	& 114.36 \ 	&  66.08 &	42.2\\
  24\,576		& 786\,432	        & $2.2 \cdot 10^{12}$	& 8 (40)	& 177.69 \ 	&  78.24 &	56.0\\
	\hline
\end{tabular}
\medskip
\caption{Weak scaling result times for the Uzawa multigrid method; number of iterations, including number of iterations of the preconditioner on the coarse grid, time-to-solution (TtS), TtS without the time spent by the coarse grid iterations (T$_{\!w}$), and the time spent by the coarse grid in percents (T$_{\!c}$).}
\label{T:itertime_lap_par}
}
\end{table}
Scaling results are present for up to $786\, 432$ threads and more than 
$10^{12}$ degrees of freedom (DoF). 
We use 
32 threads per node and 6 tetrahedra 
in the initial mesh per thread. 
The finest computational level is obtained by a 7 times refined initial mesh.
Excellent scalability for the multilevel part of the algorithm is achieved, see Tab. \ref{T:itertime_lap_par}. 
We point out that 
the coarse grid solver in these examples 
considered as a stand-alone solver lacks optimal complexity. 
However, even for the largest 
example with more than $10^{12}$ DoF it still requires less than 60\% of the overall time. 


\smallskip
\subsection{Fault tolerant algorithms}
In the future  
era of exa-scale computing systems, highly scalable implementations will execute up to
billions of parallel threads on millions of compute nodes. In this scenario,
fault tolerance will become a necessary property of hardware,
software and algorithms.
Nevertheless, nowadays commonly used redundancy approaches,
e.g., check-pointing, will be too costly, due to the memory and energy consumption. Our approach, on the other hand, incorporates the resilience strategy directly into the 
multigrid solver, and thus 
does not require additional memory.

In \cite{huber-gmeiner-ruede-wohlmuth_2015}, we 
introduce a methodology and data-structure to efficiently recover lost data due to a processor crash (hard fault) when solving elliptic PDEs with 
multigrid algorithms.
We consider 
a fault model 
where a processor stores the mesh data of a subdomain including all its refined levels
in the multigrid hierarchy.
Therefore, when a processor crashes 
all data is lost in the faulty domain $\Omega_F \subset \Omega$.
The healthy domain $\Omega_H \subset \Omega$ is unaffected by the fault, and 
data in this domain remains available.
Further, we denote by 
$\Gamma \coloneqq \partial\Omega_F \cap \partial \Omega_H$ the interface. 
The nodes associated with $\Gamma$ 
are used to communicate between neighboring processors by introducing {\em ghost copies}
that 
redundantly exist on different processors. 
Therefore, a complete recovery of these nodes is 
possible without additional redundant storage.

In order to recover the lost nodal values $(\mathbf{u}_F, \mathbf{p}_F)$ in $\Omega_F$,
we propose to solve a local subproblem in $ \Omega_F$ of \eqref{stokes} with
Dirichlet boundary conditions 
on $\Gamma$ for velocity and pressure, respectively.
To guarantee that the local system is uniformly well-posed, we formally 
include a compatibility condition obtained from the normal components of the velocity.
If the local  solution in $ \Omega_F$ is computed while the global process is halted,
then this is a local recovery strategy. 
For a first analysis, we assume that the local recovery 
is free of cost, i.e., that it can be executed in zero time. 

In \cite{huber-gmeiner-ruede-wohlmuth_2015} this local strategy is extended to become 
a global recovery strategy.
To this end, the solution algorithm proceeds asynchronously in the faulty and
the healthy domain such that no process remains idle.
Temporarily, the two subdomains are decoupled at the interface $\Gamma$,
and the recovery process is 
accelerated by delegating more compute resources to it.
This acceleration is termed the {\em superman strategy}.
%
Once the recovery has proceeded far enough and has caught up with the regular solution process,
both subdomains are re-coupled and the regular global iteration is resumed.
These approaches result in a time- and energy-efficient recovery of the lost data.

\smallskip
\subsubsection*{Numerical examples}
%
\begin{figure}
\begin{minipage}[h]{0.5\textwidth}
\includegraphics[width=0.9\textwidth]{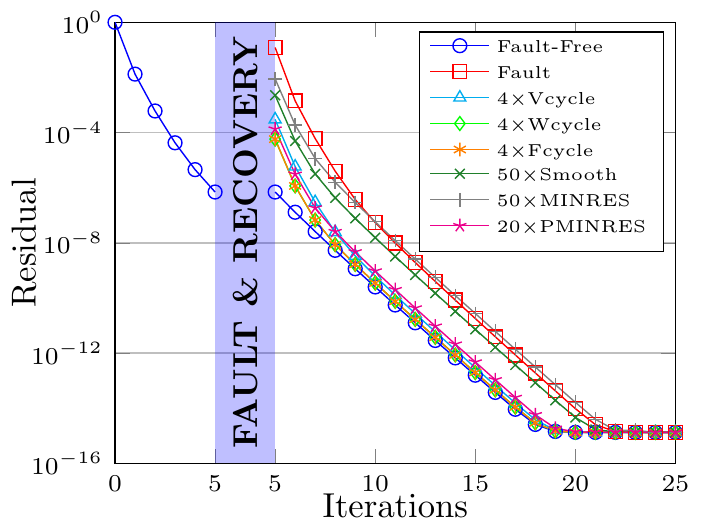}
\end{minipage}
\begin{minipage}[h]{0.5\textwidth}
   \begin{tabular}{r|cccc}
   \hline
	  &  \multicolumn{4}{c}{DN Strategy $\eta_{\text{super}} =4$}  \\
   Size  &   ${n_H}=0$&$2$&$3$&$4$\\
	  \hline \hline
$769^3$	&	19.21		&	0.05	&	-0.38	&	4.22	\\

$1\,281^3$&	21.27		&	-0.15	&	-0.68	&	3.95	\\

$2\,305^3$&	18.50		&	-0.33 & -0.87	&	3.76	\\

$4\,353^3$&	19.74		&	2.58	&	4.61	&	9.24\\
\hline
\end{tabular}
\end{minipage}
\caption{Left: Convergence of the relative residual for different local
recovery strategies, Right: Global recovery in terms of a domain
partitioning concept with over-balancing by a factor of
$\eta_{\text{super}} =4$ in the case of the Laplace operator.}
\label{fig:fault}
\end{figure}
In Fig. \ref{fig:fault} (left), we consider a test scenario in $\Omega=(0,1)^3$ in
which we continuously apply multigrid V(3,3)-cycles of
the Uzawa multigrid method introduced in Sect.~\ref{subsec:multigrid}.
In total 23 iterations are needed to reach the round-off limit of $10^{-15}$. 
During the iteration, a fault is provoked after 5 iterations.
This affects 2.1\% of the pressure and velocity unknowns.
We continue with global solves after the recovery by solving  the local auxiliary problem iteratively
in $\Omega_F$. 
As approximate subdomain solvers we compare the fine grid Uzawa-smoother, 
the minimal residual method, the block-diagonal preconditioned \mbox{MINRES} (PMINRES) method
and V, F, W-cycles in the 
variable smoothing variant from Sect.~\ref{subsec:multigrid}. 
For the block-preconditioner in case of PMINRES a standard V-cycle
and a lumped mass-matrix $M$ are used.
Fig.~\ref{fig:fault} (left)
displays also the cases in which no fault appears (fault-free) and when no recovery
is performed after the fault.

After the fault, we observe that the residual jumps up
and  when no recovery is performed, the iteration 
must start almost from the beginning.
A higher pre-asymptotic convergence rate after the fault 
helps to catch up, so that only four additional iterations are required.
This delay can be further reduced by a local recovery computation,
but  only local multigrid cycles are found to be efficient recovery methods.
Four local recovery multigrid cycles are sufficient
such that the round-off limit is reached with the same number of iterations
as in the fault-free case. 
The other recovery methods either yield no significant improvement 
or too many recovery iterations would be required. 

The table on the right of Fig.~\ref{fig:fault} summarizes the performance of the global recovery
in terms of the time delay (in seconds compute time) 
as compared to an iteration without faults.
The tests are performed for a large Laplace problem discretized with
up to almost $10^{11}$ unknowns.
The undisturbed solution  takes 50.49 seconds with
14\;743 cores on JUQUEEN.
Two faults are provoked, one after 5 V-cycles, one after 9.
Both faults are treated with both the
global recovery strategy and a local \emph{superman} process that is 
$\eta_{\text{super}} =4$ times as fast a a regular processor. 
Faulty and healthy domain remain decoupled for ${n_H}$ cycles
with the Dirichlet-Neumann strategy, i.e., solving a Dirichlet  problem on $\Omega_F$ and a Neumann problem on $\Omega_H$,
then the regular iteration is resumed.
The case ${n_H}=0$ corresponds to performing no recovery at all and leads to a
delay of about 20 seconds. 
By the superman recovery and global re-coupling
after ${n_H}=2$ cycles, the delay can be reduced to just a few seconds.
In some cases the fault-affected computation is even faster
than the regular one, as indicated by negative
time delays in the table.


\section{A posteriori mass correction}\label{sec:mc}
%
%
A common problem of continuous pressure finite element methods, such as the stabilized scheme considered in this work, is that they do not preserve the physical concept of mass-conservation in a local sense. Although the discrete solution $\mathbf{u}_h \in \mathbf{V}_h$ satisfies a weak form of the mass-balance equation, it can, in general, not be considered as element-wise mass conservative. Due to the lack of element-piecewise constants in the pressure space, we cannot expect a property of the form 
$$
\int_{\partial T} \mathbf{u}_h\cdot\mathbf{n} \,{\rm d}s = 0, \quad T \in \mathcal{T}_h,
$$
to hold, which is a crucial prerequisite to enable an element-by-element post-processing of the discrete solution to obtain strongly divergence-free velocities. However, we will demonstrate  how a local conservation property can be obtained also for continuous pressure interpolations. For this we need to change the viewpoint which requires some additional notation that is introduced next.

\subsection{Dual mesh}
Based on the triangulation $\mathcal{T}_h$, which we  refer to $\mathcal{T}_h$ as the \emph{primal} mesh, we construct a \emph{dual} mesh as it is often done for finite-volume methods. To be precise, let $\mathcal{T}_i \subseteq \mathcal{T}_h$ denote the nodal patch associated with the vertex $\mathbf{x}_i$, $i=1,\dots,N$, of the primal mesh, and let $\lambda^T_i(\mathbf{x})$ denote the barycentric coordinate of $\mathbf{x} \in \mathbb{R}^d$ with respect to the vertex $\mathbf{x}_i$ belonging to any element $T \in \mathcal{T}_i$. We define
$$
B_i^T := \{ \mathbf{x}\, :\, \lambda_i^T(\mathbf{x}) \ge \lambda_j^T(\mathbf{x}),~i \ne j \},
$$
and we set
$
B_i := \bigcup_{T \in \mathcal{T}_i} B_i^T.
$
This construction results in a partition of $\Omega$ into $d$-polytopes $B_i$ which we shall refer to as the set $\mathcal{B}_h := \{ B_i, i=1,\dots,N \}$ of \emph{control volumes}. For an illustration of typical control volumes in $d=2,3$ dimensions, we refer to Fig.~\ref{fig:dualcells}.
\begin{figure}[hpbt]
\centering
    	\includegraphics[width=.3\textwidth]{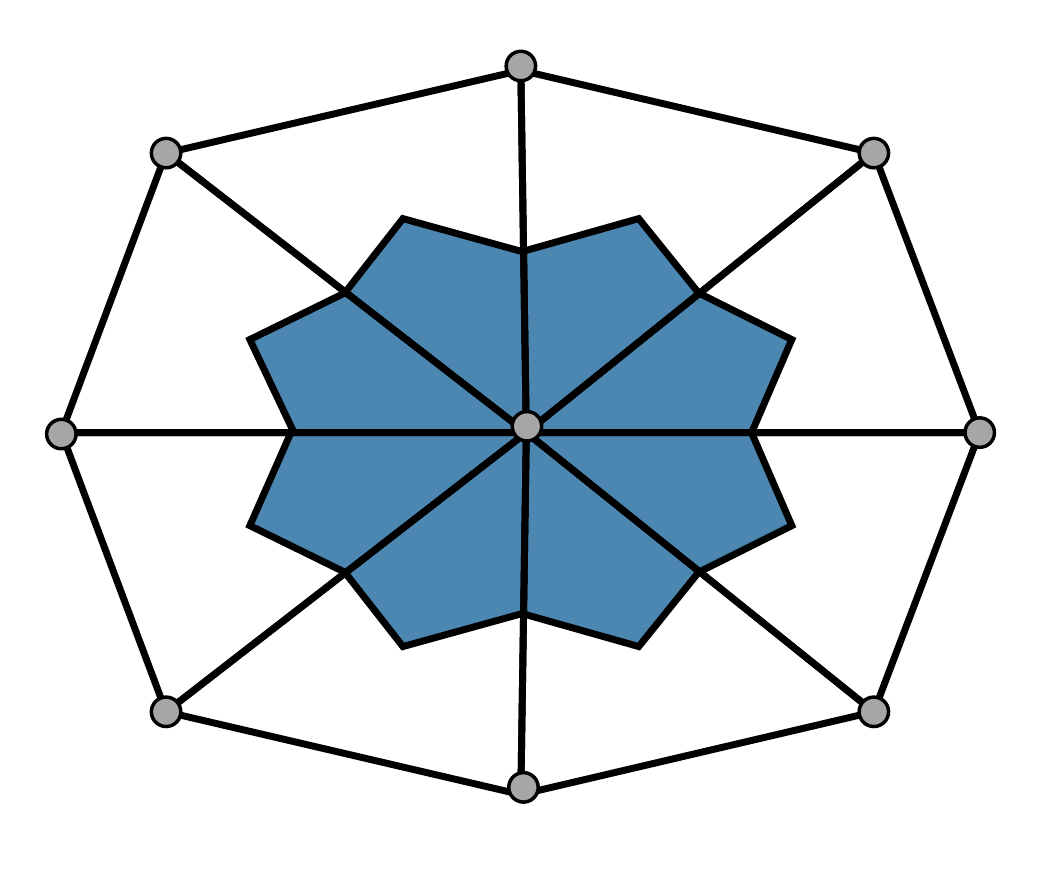}
\hspace{3em}
    	\includegraphics[width=.35\textwidth]{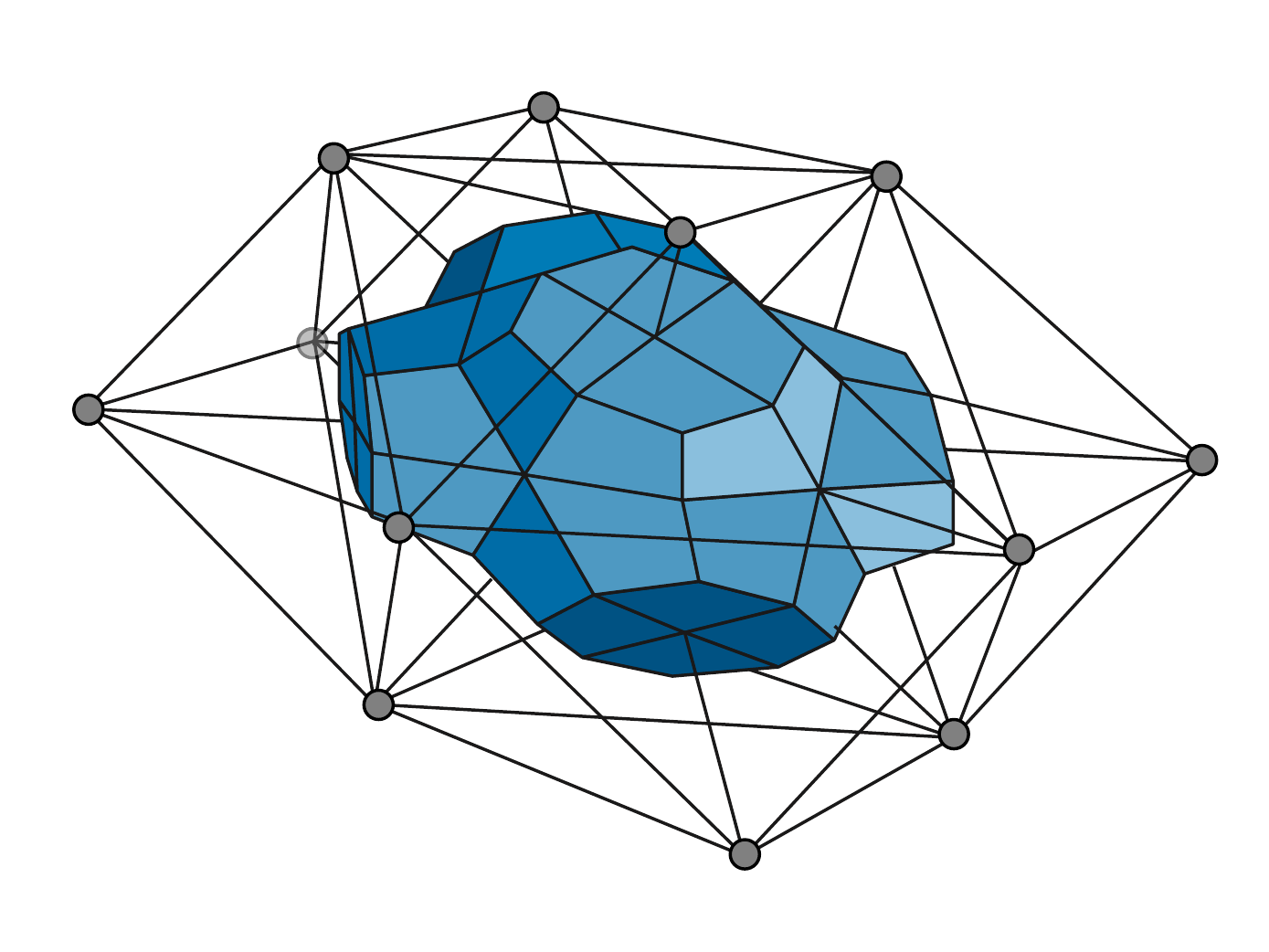}
    \caption{\label{fig:dualcells} Illustration of a nodal patch $\mathcal{T}_i$ around a vertex $\mathbf{x}_i$ with the associated dual control volume $B_i$ (shaded) for $d=2$ (left) and $d=3$ (right).}
\end{figure}
The boundary $\partial B_i$ of every such volume $B_i$ can be decomposed into the set of internal control facets $\gamma_{ij}^T \coloneqq \partial B_i \cap \partial B_j \cap T : B_j \ne B_i$ and boundary facets $\gamma_{i\partial}^T \coloneqq \partial\Omega \cap \partial B_i \cap \partial T$, where $B_i$ and $B_j \in \mathcal{B}_h$ are the control volumes defined with respect to the two vertices $\mathbf{x}_i$ and $\mathbf{x}_j$, which are both end points of an edge of some $T \in \mathcal{T}_h$. With each $\gamma_{ij}^T $ we associate a unique normal $\mathbf{n}_{ij}^T$ oriented from $B_i^T$ towards $B_j^T$. We note that the two faces $\gamma_{ij}^T $ and $\gamma_{ji}^T $ are geometrically the same but have a differently oriented normal.
%
We are now prepared to discuss mass conservation properties on the dual mesh.

\subsection{Corrected mass flux}
In the following, we consider a wider class of continuous-pressure discretization schemes. After we state the main result, we discuss simplifications that are possible if the choice of the finite-element spaces is restricted to stabilized linear equal-order elements as discussed in this work. For the moment let us put the previous definitions aside and assume that $\mathbf{V}_h \times Q_h$ is a continuous mixed finite element space which may be stable or unstable. For unstable choices, we require that appropriate stabilization terms are added to the variational form such that a unique discrete solution exists.


Given a finite element solution $(\mathbf{u}_h,p_h) \in \mathbf{V}_h \times Q_h$, we define a mass flux defect
\begin{equation} \label{eq:fluxc}
\kappa_{ij}^T(\mathbf{u}_h,p_h) := \frac{1}{(d+1)|\gamma_{ij}^T|} \big(\mathcal{R}_{i}^T(\mathbf{u}_h,p_h)-\mathcal{R}_j^T(\mathbf{u}_h,p_h)\big),
\end{equation} 

\noindent where the local residual fluxes $\mathcal{R}_i^T$ are defined as
\begin{align*}
\mathcal{R}_i^T(\mathbf{u}_h,p_h) := \int_{B_i^T} \div\mathbf{u}_h\,{\rm d}x - \int_T \div\mathbf{u}_h\,\phi_i\,{\rm d}x - c_T(p_h, \phi_i) - \langle \fst,\phi_i\rangle_T,
\end{align*}
\noindent with $\phi_i \in Q_h$ denoting the linear nodal shape function associated with the local vertex $i$ of element $T \in \mathcal{T}_h$. Finally, we define the corrected mass flux $j(\mathbf{u}_h,p_h)$ which is oriented and defined facet-wise as
\begin{equation}
\label{eq:flux-correction}
j(\mathbf{u}_h,p_h)|_{\gamma_{ij}^T} := \mathbf{u}_h |_{\gamma_{ij}^T} \cdot\mathbf{n}_{ij}^T - \kappa_{ij}^T(\mathbf{u}_h,p_h).
\end{equation}
\noindent Given these preparations, we formulate our main observation, which is the conservation of the corrected fluxes over the boundary of a dual control volume.
\begin{theorem} \label{thm:dual}
Let $(\mathbf{u}_h,p_h) \in \mathbf{V}_h \times Q_h$ denote the solution of the discrete Stokes problem and assume that $c_T (\cdot,1) + \langle \fst, 1 \rangle_T = 0$ for every $T \in \mathcal{T}_h$. Then the corrected mass-flux $j(\mathbf{u}_h,p_h)$ is locally conservative, i.e.,
$$
\int_{\partial B} j(\mathbf{u}_h,p_h) \,{\rm d}s = 0, \qquad B \in \mathcal{B}_h.
$$
\end{theorem}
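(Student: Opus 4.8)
The idea is to reduce the claim to two elementary facts: a local partition‑of‑unity identity for the residual fluxes $\mathcal{R}_i^T$, and the second equation of the discrete problem \eqref{vastab} tested with the piecewise‑linear nodal functions $\phi_i$. Throughout, $\mathcal{T}_k$ denotes the nodal patch of $\mathbf{x}_k$ and I fix one control volume $B=B_k\in\mathcal{B}_h$.

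\emph{Step 1 (the residuals sum to zero on each element).} On a fixed $T$, summing the definition of $\mathcal{R}_i^T$ over the $d+1$ vertices $i$ of $T$ and using $\bigcup_i B_i^T=T$, $\sum_i\phi_i\equiv1$ on $T$, and the hypothesis $c_T(\cdot,1)+\langle\fst,1\rangle_T=0$,
\begin{align*}
\sum_{i\in T}\mathcal{R}_i^T(\uh,\ph)=\int_T\div\uh\,dx-\int_T\div\uh\,dx-\big(c_T(\ph,1)+\langle\fst,1\rangle_T\big)=0 .
\end{align*}

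\emph{Step 2 (unfolding $\int_{\partial B_k}j\,ds$).} For an interior vertex $\partial B_k$ is the union of the internal control facets $\gamma_{kj}^T$, $T\in\mathcal{T}_k$, $j\in T\setminus\{k\}$, with $\mathbf{n}_{kj}^T$ the outward normal of $B_k$ there; the portions of the various $\partial B_k^T$ sitting on edges through $\mathbf{x}_k$ cancel pairwise between the two elements sharing such an edge, by continuity of $\uh$, and for a boundary vertex the only additional pieces, $\gamma_{k\partial}^T\subset\partial\Omega$, carry $\uh\cdot\mathbf{n}=0$ so they do not contribute. Applying the divergence theorem on each $B_k^T$ therefore gives $\sum_{T\in\mathcal{T}_k}\sum_{j\in T\setminus\{k\}}\int_{\gamma_{kj}^T}\uh\cdot\mathbf{n}_{kj}^T\,ds=\int_{B_k}\div\uh\,dx$. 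Since $\kappa_{kj}^T$ is constant on $\gamma_{kj}^T$ we have $\int_{\gamma_{kj}^T}\kappa_{kj}^T\,ds=\tfrac{1}{d+1}(\mathcal{R}_k^T-\mathcal{R}_j^T)$, and summing over the $d$ facets at $\mathbf{x}_k$ inside a given $T$ and using Step 1 (so $\sum_{j\ne k}\mathcal{R}_j^T=-\mathcal{R}_k^T$) collapses this sum to $\mathcal{R}_k^T$. With $j|_{\gamma_{kj}^T}=\uh\cdot\mathbf{n}_{kj}^T-\kappa_{kj}^T$ this yields
\begin{align*}
\int_{\partial B_k}j(\uh,\ph)\,ds=\int_{B_k}\div\uh\,dx-\sum_{T\in\mathcal{T}_k}\mathcal{R}_k^T .
\end{align*}

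\emph{Step 3 (the patch sum of residuals).} Summing the definition of $\mathcal{R}_k^T$ over $T\in\mathcal{T}_k$, on which $\phi_k$ is supported,
\begin{align*}
\sum_{T\in\mathcal{T}_k}\mathcal{R}_k^T=\int_{B_k}\div\uh\,dx+b(\uh,\phi_k)-\cst(\phi_k,\ph)-\langle\fst,\phi_k\rangle .
\end{align*}
As $\phi_k\notin\Qh$ in general, I split $\phi_k=q_k+\mu_k$ with $\mu_k=|\Omega|^{-1}\!\int_\Omega\phi_k\,dx$ and $q_k\in\Qh$. The constant part contributes $\mu_k\big(b(\uh,1)-\cst(1,\ph)-\langle\fst,1\rangle\big)=0$, since $b(\uh,1)=-\int_{\partial\Omega}\uh\cdot\mathbf{n}\,ds=0$ by the boundary condition and $\cst(1,\ph)+\langle\fst,1\rangle=\sum_T\big(c_T(\ph,1)+\langle\fst,1\rangle_T\big)=0$ by the hypothesis; the remaining term $b(\uh,q_k)-\cst(q_k,\ph)-\langle\fst,q_k\rangle$ is zero by the second equation of \eqref{vastab}. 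Hence $\sum_{T\in\mathcal{T}_k}\mathcal{R}_k^T=\int_{B_k}\div\uh\,dx$, and inserting this into Step 2 gives $\int_{\partial B_k}j(\uh,\ph)\,ds=0$.

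The one genuinely delicate part is the geometric bookkeeping of Step 2 — identifying which portions of the $\partial B_k^T$ survive in $\partial B_k$ and with which orientation — which I have only sketched; everything else is a direct computation, and the role of the specific weight $\tfrac{1}{(d+1)|\gamma_{ij}^T|}$ in $\kappa_{ij}^T$ is precisely to make the telescoping in Step 2 close after Step 1, so that the element residual is redistributed consistently onto the dual volumes.
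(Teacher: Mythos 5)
Your argument is correct and follows exactly the route the paper indicates for Theorem~\ref{thm:dual}: the proof cited in \cite{GmeWalWoh:2014} is likewise built on the element-wise summation identity $\sum_{i}\mathcal{R}_i^T=0$ (your Step~1, which is where the weight $\tfrac{1}{(d+1)|\gamma_{ij}^T|}$ and the hypothesis $c_T(\cdot,1)+\langle\fst,1\rangle_T=0$ enter) combined with the weak mass-conservation property of the discrete solution tested with the nodal hat functions (your Step~3, including the correct handling of the zero-mean constraint on $Q_h$). The geometric bookkeeping you flag in Step~2 — cancellation of the $\partial B_k^T\cap\partial T$ contributions by continuity of $\uh$ and vanishing of the boundary facets — is sound, so no gap remains.
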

\noindent The proof is based on summation arguments and the weak mass-conservation property of the discrete solution, see \cite[Theorem~3.2]{GmeWalWoh:2014}.

Revisiting the construction of the mass-correction, we observe that for a stable pairing without additional stabilization terms (such as the classical Taylor-Hood element), the defect $\kappa_{ij}^T(\mathbf{u}_h,p_h)$ obviously does not depend on $p_h$, while for stabilized equal-order linear elements the correction does not depend on $\mathbf{u}_h$ since we find by a simple integration that the divergence terms cancel inside each element. The remaining terms can often be considerably simplified, e.g., for the stabilized discretization we consider in this paper, we find that the flux correction assumes the simple form
\begin{equation} \label{eq:fluxPSPG}
\kappa^T_{ij}(p_h) = \delta_T h_T^2 \nabla p_h|_T \cdot \mathbf{n}_{ij}^T.
\end{equation}
\noindent which can be straightforwardly evaluated.

Given a locally conservative mass-flux on the dual mesh, we could next solve local equilibration problems to obtain local fluxes also with respect to the primal mesh. This allows the lifting of the correction into $H(\mathrm{div})$-conforming finite element spaces and thereby allows us to define a strongly mass-conservative velocity  solution with the same order of convergence as the original finite element solution without post-processing. Details are outlined in \cite{GmeWalWoh:2014}.

However, when coupling the equal-order linear stabilized scheme to transport problems, we can even circumvent the reconstruction step by directly solving the transport equations on the dual mesh in a finite volume fashion, using the corrected mass fluxes for advection.

\subsection{Conservative coupling to transport equations}
In many applications non-isothermal models (e.g., mantle convection) or reactive models (e.g., the simulation of thrombus formation in the human blood-stream) are considered. What these models have in common is that we couple fluid flow in the form of \eqref{stokes} to a (set of) transport equation(s) of the form:
\begin{equation}
\label{eq:energy}
\partial_t \vartheta + \div(\vartheta \mathbf{u} - \varepsilon \nabla \vartheta) = 0\qquad \text{ in } \Omega \times (0, t_{\rm end}],
\end{equation}
\noindent for some simulation end time $t_{\rm end} > 0$, where $\vartheta$ denotes the transported quantity, $\varepsilon > 0$ plays the role of a conductivity,
 and $\mathbf{u}$ is an often dominant advective velocity field that typically results from a flow simulation. To make the problem well-posed, we consider an initial temperature field $\vartheta(t=0)=\vartheta^0$ and homogeneous Neumann boundary conditions $\varepsilon \nabla\vartheta\cdot\mathbf{n}=0$ on the whole boundary $\partial\Omega$ of the computational domain, where $\mathbf{n}$ denotes an outward-pointing unit-normal vector. This is done only for simplicity of the exposition and as we will see in the numerical examples  the extension to other types of boundary conditions is possible.

Of course, in non-isothermal models the flow-properties such as the forcing or the viscosity may again depend non-linearly on $\vartheta$. However, for simplicity we shall consider only a one-way coupling between flow and transport here. For more general models with full non-isothermal coupling we refer to our recent work~\cite{waluga2015mass}.

Our aim is to complement the equal-order linear stabilized scheme with a finite-volume solver that associates nodal degrees of freedom with unknown coefficients. Therefore,  we define the following space on the barycentric dual mesh:
\begin{align*}
R_h := \{r_h \in L^2(\Omega)\, :\, r_h|_{B} \in P_0(B),\, B\in\mathcal{B}_h\},
\end{align*}
\noindent We note that due to the fact that $\mathrm{dim}\,R_h = \mathrm{dim}\,V_h = n_p$, we can define a natural bijection between these two spaces. More precisely, given the nodal basis $\phi_i$ of $V_h$ and the piecewise constant basis $\chi_i$ of $R_h$, we can define the natural transfer operators  $\pi_h: \sum_{i=1}^{n_p} q_i \chi_i \to \sum\noindent_{i=1}^{n_p} q_i \phi_i$, and $\overline{\pi}_h: \sum\noindent_{i=1}^{n_p} q_i \phi_i \to \sum\noindent_{i=1}^{n_p} q_i \chi_i$ which by construction satisfy the property $\pi_h\overline{\pi}_h = \overline{\pi}_h\pi_h = \text{id}$.

Based on the discrete velocity solution of \eqref{vastab} or \eqref{modvastab}, the transport equation for the temperature can be then advanced by an upwind finite-volume scheme which we state in form of a Petrov-Galerkin method: given $(\mathbf{u}_h^n, p_h^n) \in \mathbf{V}_h \times Q_h$ and $\vartheta_h^n \in V_h$, find $\vartheta_h^{n+1} \in V_h$ such that
\begin{align}
\label{eq:discrete-weak-form-transport}
\sum_{B_i \in \mathcal{B}_h}\int_{B_i}  \overline{\pi}_h \vartheta_h^{n+1}\,r_h \,{\rm d}x = \sum_{B_i \in \mathcal{B}_h}\int_{B_i} \vartheta_h^n\,r_h \,{\rm d}x - \Delta{t}_n\,\mathcal{A}(\vartheta_h^n, r_h),
\end{align}
\noindent for all $r_h \in R_h$, where the use of $\overline{\pi}_h$ introduces a mass-lumping, and we set the bilinear form
$$
\mathcal{A}(\vartheta_h^n, r_h) := \sum_{B_i \in \mathcal{B}_h} \int_{\partial B_i \backslash \partial\Omega} \left({\rm j}(\mathbf{u}_h^n, p_h^n)\,\langle\vartheta_h^n\rangle_{\rm up} - \varepsilon\nabla\vartheta_h^n\cdot\mathbf{n}_i\right)\,r_h\,{\rm d}s,
$$
\noindent where $\mathbf{n}_i$ is the unit normal vector pointing outward of $B_i$. To ensure stability, we use an up-winding, namely
\begin{align*}
\langle\vartheta_h^n\rangle_{\rm up}|_{\partial B_i\cap\gamma_{ij}^T} :=
\begin{cases}
\vartheta_h^n|_{B_i} & \text{if } {\rm J}_{ij}^T \ge 0\\
\vartheta_h^n|_{B_j} & \text{if } {\rm J}_{ij}^T \le 0\\
\end{cases},\qquad i \ne j,
\end{align*}
\noindent where we define the net flux ${\rm J}_{ij}^T := \int_{\gamma_{ij}^T}{\rm j}(\mathbf{u}_h^n, p_h^n)|_{\partial B_i}\,{\rm d}s$ based on the locally conservative facet flux function
\begin{align}
\label{eq:conservative-flux-pspg}
{\rm j}(\mathbf{u}_h^n, p_h^n)|_{\partial B_i} := (\mathbf{u}_h^n - \delta_T h_T^2 \nabla p_h^n)|_{\partial B_i}\cdot \mathbf{n}_i.
\end{align}

The distinctive feature of this method is that it can be implemented with minimal overhead, using only nodal data structures on semi-structured or even fully unstructured simplicial meshes. This collocated data layout makes the scheme particularly interesting for the implementation into highly performant stencil-based implementations and legacy codes. Of course, the construction above assumes that the discrete Stokes problem is solved exactly, which is often not feasible in practice, e.g., when iterative solvers are employed. However, the mass-defect introduced by solver inaccuracies are only related to the iteration error in contrast to the discretization error as it is common for methods that do not obey local conservation properties. 

\subsection{Numerical examples}
In the following we would like to show two examples in which the effect of the mass-corrections can be clearly observed. Firstly, we show a geometry in which flow is accelerated and decelerated, leading to regions with poor mass-conservation. We transport a concentration through this domain and show solutions obtained by the corrected coupling scheme and a non-corrected approach. Secondly, we discuss a more involved example from \cite{waluga2015mass} in which we consider non-isothermal fluid properties and induce a forcing by buoyancy terms.

\subsubsection{Pipe with obstacles}

As a simple example, we consider a pipe which is cylindrical with unit radius and aligned with the $z$-axis. Inside we place three spherical obstacles as shown in Fig.~\ref{fig:pipe-example-setup}. We further neglect forcing terms and diffusion, i.e., we set $\mathbf{f}=\mathbf{0}$ and $\varepsilon=0$. The inflow at the left boundary ($z=0$) is prescribed as $\mathbf{g} = (0,0,1-x^2-y^2)^\top$, at the right boundary we consider free outflow conditions, and for the remaining boundary we impose a no-slip condition. The transported quantity $\vartheta$ is set to $\vartheta=1$ at the left boundary and can exit the pipe domain freely at the right boundary.

\begin{figure}[hpbt]
\vspace*{-0.25cm}
\centering
\includegraphics[width=.55\textwidth]{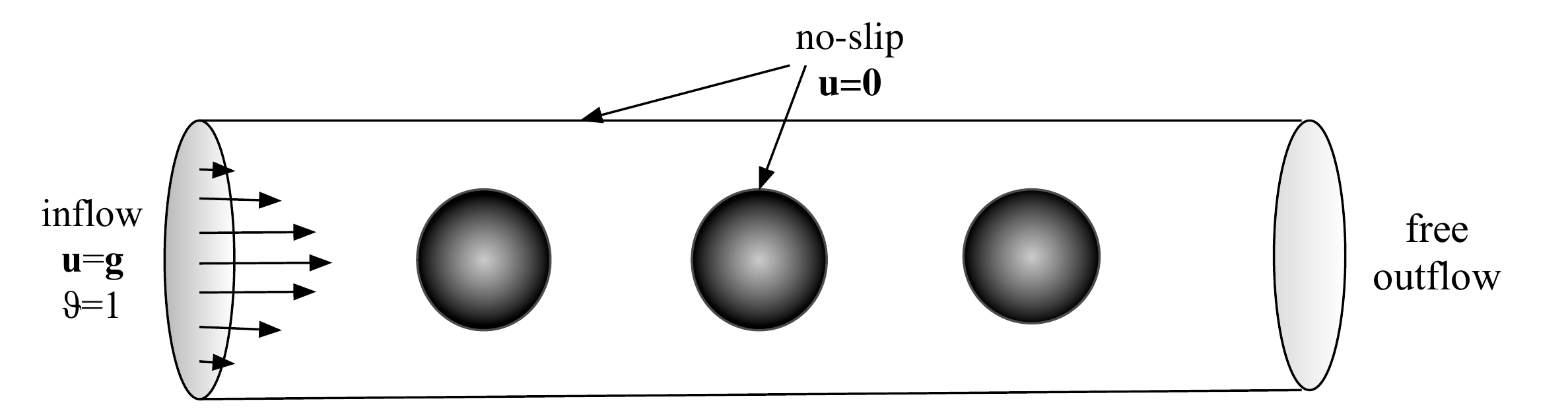}
\includegraphics[width=.415\textwidth]{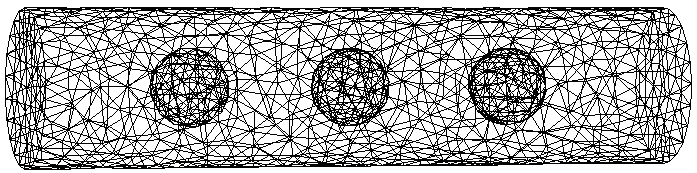}
    \caption{\label{fig:pipe-example-setup} Left: illustration of the geometry and the boundary conditions. Right: coarsest mesh.
}
\end{figure}

We conduct a mesh-refinement study on a series of three meshes with increasing resolution (cf. Fig.~\ref{fig:pipe-example-setup}) in which we choose our time-steps according to the CFL stability condition. In Fig.~\ref{fig:pipe-example}, we compare our results at $t_{\rm end}=10$ for a non-conservative scheme as well as for the same series of experiments conducted with the conservatively coupled scheme. It can be observed that the flux-correction keeps the concentration profiles within physical bounds, and already a visual inspection reveals that it produces physically better solutions with less resolution than the uncorrected method. In the non-conservative case, we observe solution overshoots of around $100\%$ even for the finest mesh. The application of a simple limiter to these overshoots would take mass/energy out of the system, and thus we cannot expect reasonable results when using moderate resolutions with a straightforward coupling. The conservative mass-correction is a simple and elegant way to overcome this deficiency.
\begin{figure}[hpbt]
\centering
\begin{minipage}{.475\textwidth}
	\includegraphics[width=\textwidth]{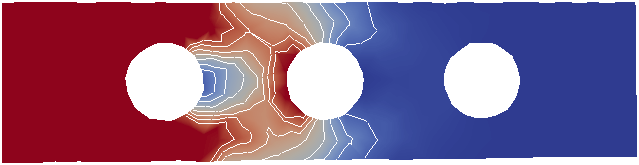}
	\includegraphics[width=\textwidth]{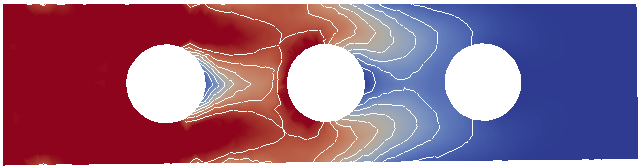}
	\includegraphics[width=\textwidth]{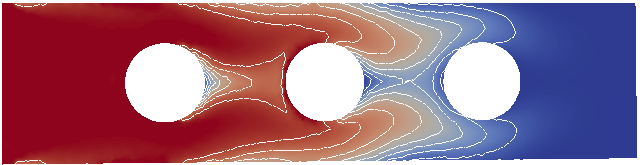}
\end{minipage}
\hfill
\begin{minipage}{.475\textwidth}
	\includegraphics[width=\textwidth]{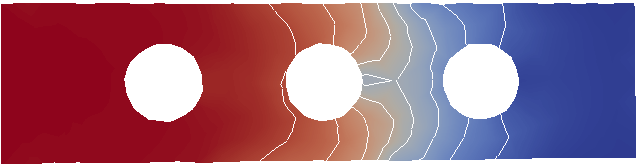}
	\includegraphics[width=\textwidth]{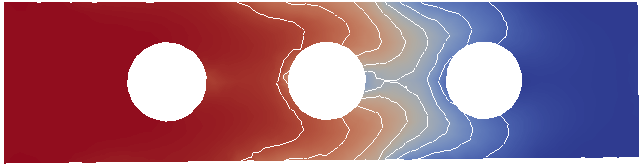}
	\includegraphics[width=\textwidth]{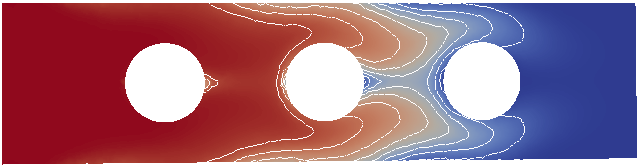}
\end{minipage}

    \caption{\label{fig:pipe-example}
    Cross-section in the $y/z$-plane of the computed solution on mesh-levels 0, 1, and 2 for the uncorrected scheme (left) and the corrected scheme (right); the plotting range is limited to $[0,1]$ in the left picture.
    }
\end{figure}

\subsubsection{Non-isoviscous case}
In the following, we  consider two examples for the non-isoviscous case, both regarding simulations of the Earth mantle. Firstly, the full non-isothermal coupling of buoyancy-driven flow models, where we have a non-linear bidirectional coupling between flow and transport. In particular for this problem, the conservative coupling is crucial in order to avoid a spurious forcing on the velocity field as well as non-physical values of the  temperature-dependent viscosity. Secondly, the simulation of the velocity field in the Earth mantle, where the boundary velocities for the Dirichlet condition on the surface and the temperature profile for the right-hand side are given by measured data. On the inner boundary,
 we use free-slip, i.e., $\mathbf{u} \cdot \mathbf{n} = (\nu D(\mathbf{u})\, \mathbf{n} - p\, \mathbf{n}) \cdot \mathbf{t}_i = 0$ where $\mathbf{t}_i$, $i=1,2$, denote the tangential vectors.

In these cases the momentum equation is given by
\begin{align}
-\mathrm{div } (\nu D(\mathbf{u})) + \nabla p = \mathbf{f},
\end{align}
where $D(\mathbf{u}) = 1/2( \nabla \mathbf{u} + \nabla \mathbf{u}^\top)$, denotes the symmetric part of the velocity gradient and $\mathbf{f}$ a temperature dependent forcing term. We consider in the following the spherical shell $\Omega = \{ x \in \mathbb{R}^3 : 0.55 < ||x||_2 < 1\}$ as a computational domain with two different viscosities, either temperature depending or piecewise constant
\begin{align}
\nu_1 = e^{1/2 - \vartheta},\qquad \nu_2 =
\begin{cases}
1 \quad& ||x||_2 < 0.936,\\
0.001 & \text{else}.
\end{cases}
\end{align}
For both examples the computational mesh is an icosahedral spherical mesh consisting of roughly a hundred million individual tetrahedra. For the first example with $\nu_1$, we consider thermal convection  where the interior temperature at the boundary is set to $\vartheta(r=0.55)=1$ and at the surface to $\vartheta(r=1)=0$. Initially we prescribe an interpolation between core and surface temperature with a spherical harmonic perturbation. We solve the coupled problem with the previously described SCG multigrid solver for the Stokes part, coupled to an explicit time-integration of the finite-volume transport scheme with flux-correction for 11\,500 time-steps on a mid-sized department cluster. A plot of a temperature iso-surface is depicted in Fig.~\ref{fig:non-isoviscous-icosphere-example} (left). For further details on the setup and the simulation parameters we refer to \cite{waluga2015mass}. For the second example with the discontinuous viscosity $\nu_2$, where the jump roughly appears in the upper mantle (asthenosphere), we consider as a solver the Uzawa multigrid method, described in Sect.~\ref{subsec:multigrid}. A plot of the simulation result is depicted in Fig.~\ref{fig:non-isoviscous-icosphere-example} (right) which shows the typical behavior of the velocity field and  the region of the viscosity jump.

\begin{figure}[hpbt]
	\centering
	\includegraphics[width=.44\textwidth]{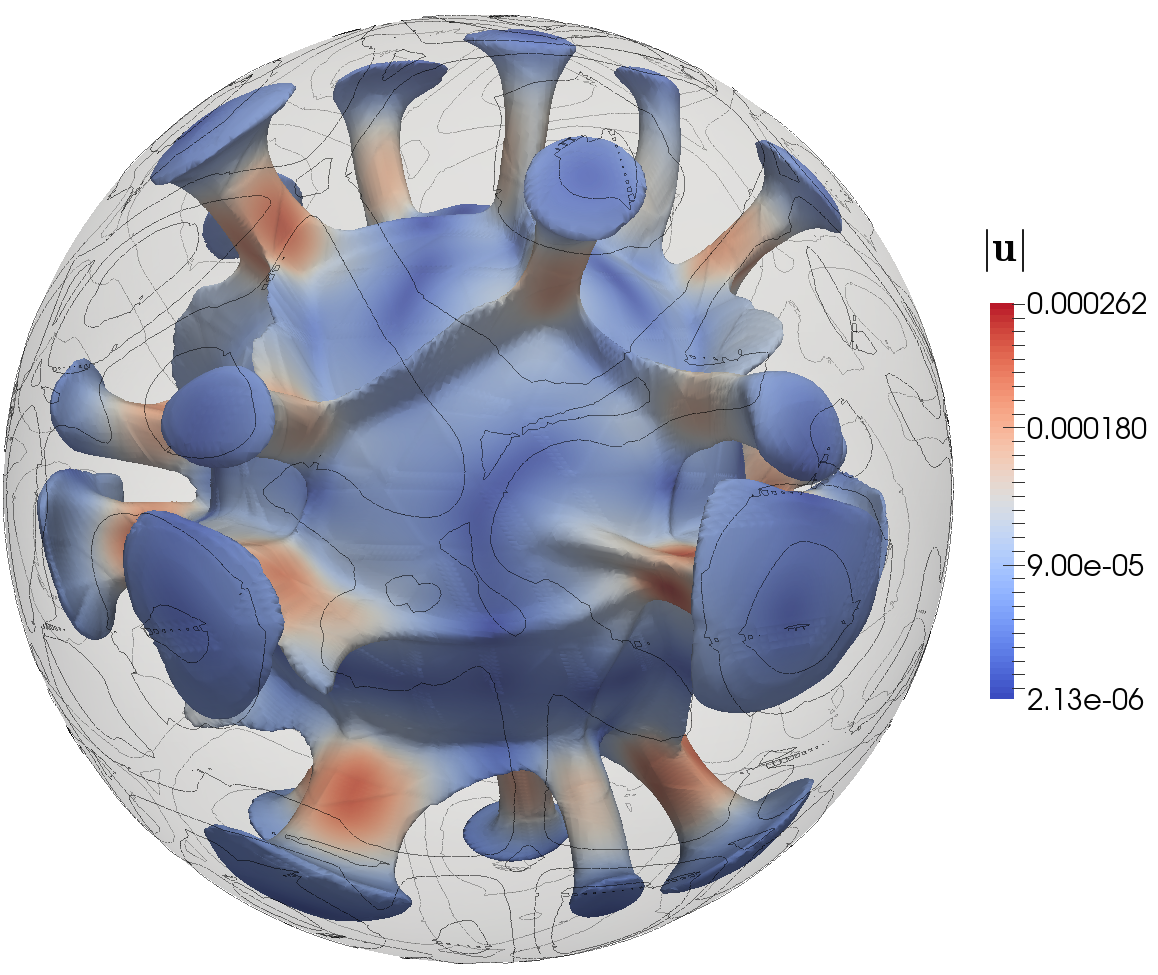}
	\hspace{0.5cm}
	\includegraphics[width=.45\textwidth]{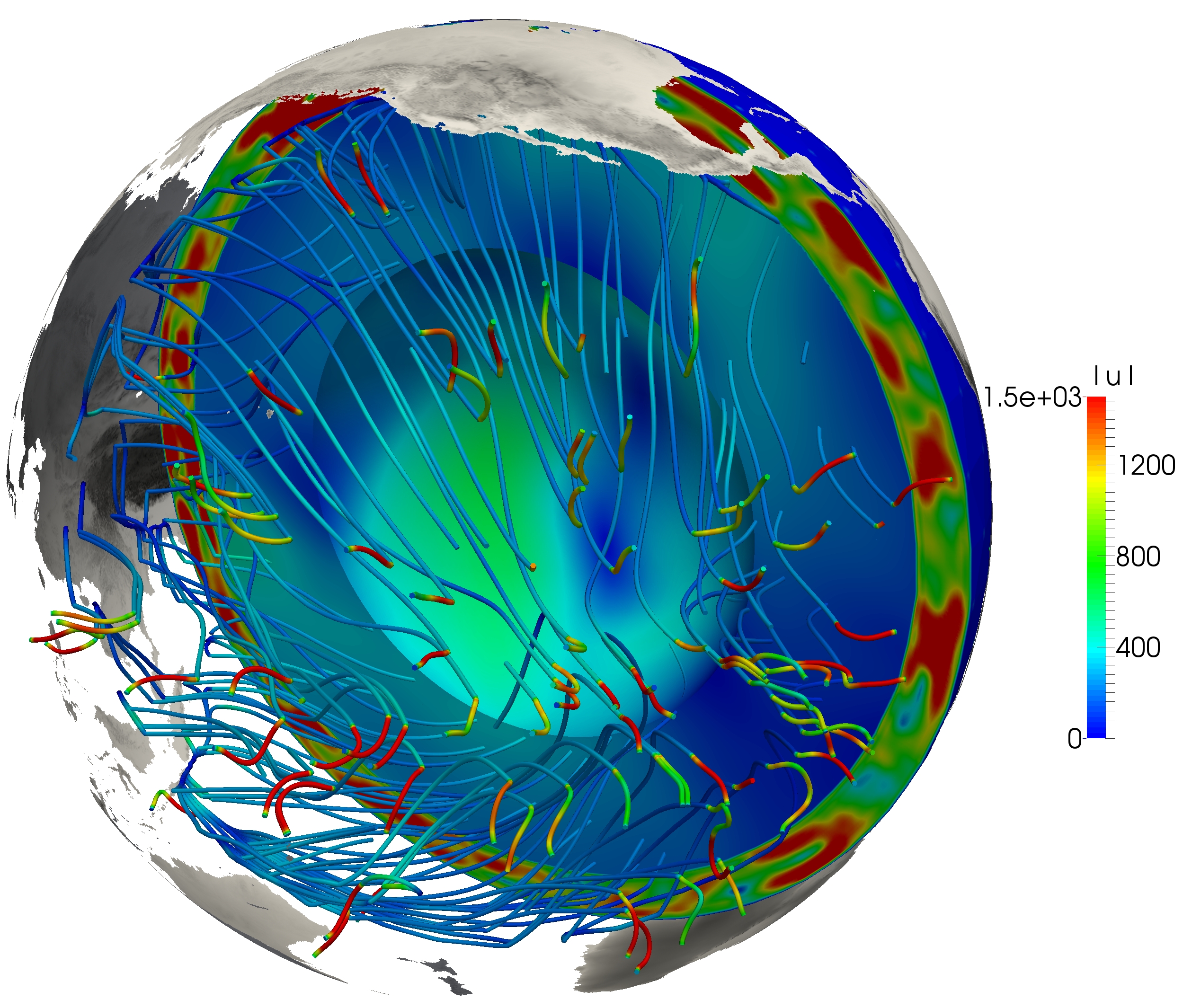}
    \caption{\label{fig:non-isoviscous-icosphere-example} Iso-surfaces for $\vartheta=0.4$ colored by the velocity magnitude after 11\,500 timesteps for $\nu_1$ (left) and magnitude of the velocity $\mathbf{u}$ for the real data simulation, including a viscosity jump $\nu_2$ (right).
}
\end{figure}


\vspace*{-0.5cm}
\paragraph{Acknowledgements}
The authors gratefully acknowledge the support by the Gauss Center for Supercomputing (GCS), the J\"ulich Supercomputing Centre (JSC), and the German Research Foundation (DFG) through the grants WO 671/14-1 and WO 671/11-1.

\vspace*{-0.5cm}
\frenchspacing

\end{document}